\newtheorem{theorem}{Theorem}[section]
\theoremstyle{plain}
\newtheorem{conjecture}[theorem]{Conjecture}
\newtheorem{definition}[theorem]{Definition}
\newtheorem{example}[theorem]{Example}
\newtheorem{lemma}[theorem]{Lemma}
\newtheorem{proposition}[theorem]{Proposition}
\newtheorem{remark}[theorem]{Remark}
\numberwithin{equation}{section}
\begin{document}
\title[Heat Traces and $p-$adic Spectral Zeta Functions ]{Heat Traces and Spectral Zeta Functions for $p-$adic Laplacians}
\author{L. F. Chacón-Cortés}
\address{Pontificia Universidad Javeriana \\
Departamento de Matem\'aticas\\
Facultad de Ciencias\\
Cra. 7 No. 43-82, Bogot\'a, Colombia}
\email{leonardo.chacon@javeriana.edu.co}
\author{W. A. Zú\~niga-Galindo}
\address{Centro de Investigación y de Estudios Avanzados, Departamento de Matemáticas,
Unidad Querétaro, Libramiento Norponiente \#2000, Fracc. Real de Juriquilla.
C.P. 76230, Querétaro, QRO, México}
\email{wazuniga@math.cinvestav.edu.mx}
\thanks{The second author was partially supported by the ABACUS project, EDOMEX-2011-CO1-165873.}
\subjclass[2000]{Primary 11F72, 11S40; Secondary 11F85}
\keywords{Heat traces, spectral zeta functions, Minakshisundaram--Pleijel zeta
functions, p-adic heat equation, p-adic functional analysis.}

\begin{abstract}
In this article we initiate the study of the heat traces and spectral zeta
functions for certain $p$-adic Laplacians. We show that the heat traces are
given by $p$-adic integrals of Laplace type, and that the spectral zeta
functions are $p$-adic integrals of Igusa-type. We find good estimates for the
behaviour of the heat traces when the time tends to infinity, and for the
asymptotics of the function counting the eigenvalues less than or equal to a
given quantity.

\end{abstract}
\maketitle

\section{Introduction}

The $p$-adic heat equation is defined as%
\begin{equation}
\frac{\partial u\left(  x,t\right)  }{\partial t}+D^{\beta}u\left(
x,t\right)  =0\text{, }x\in\mathbb{Q}_{p}\text{, }t\geq
0\label{heat_eqution_intro}%
\end{equation}
where
\[
\left(  D^{\beta}\varphi\right)  \left(  x\right)  =\mathcal{F}_{\xi
\rightarrow x}^{-1}\left(  \left\vert \xi\right\vert _{p}^{\beta}%
\mathcal{F}_{x\rightarrow\xi}\varphi\right)  ,\beta>0,
\]
is the\textbf{\ }Vladimirov operator (a $p$-adic Laplacian), and $\mathcal{F}$
denotes the $p$-adic Fou\-rier transform. This equation is the $p$-adic
counterpart of the classical fractional heat equation, which describes
particle performing a random motion (the fractional Brownian motion), a
`similar' statement is valid for the $p$-adic heat equation. More precisely,
the fundamental solution of (\ref{heat_eqution_intro}) is the transition
density of a bounded right-continuous Markov process without second kind
discontinuities. The family of non-Archimedean heat-type equations is very
large and it has deep connections with mathematical physics. For instance, in
\cite{Av-4}-\cite{Av-5}, Avetisov et al. introduced a new class of models for
complex systems based on $p$-adic analysis. From a mathematical point view, in
these models the time-evolution of a complex system is described by a $p$-adic
master equation (a parabolic-type pseudodifferential equation) which controls
the time-evolution of a transition function of a Markov process on an
ultrametric space. The simplest type of master equation is the one-dimensional
$p$-adic heat equation. This equation was introduced in the book of
Vladimirov, Volovich and Zelenov \cite{V-V-Z}. It is worth to mention here,
that the $p$-adic heat equation also appeared in certain works connected with
the Riemann hypothesis \cite{Leic}. In recent years the non-Archimedean
heat-type equations and their associated Markov processes have been studied
intensively, see e.g. \cite{Koch}, \cite{V-V-Z}, \cite{Casas-Zuniga},
\cite{Ch-Z}, \cite{Ch-Z-1}, \cite{Dra-Kh-K-V}, \cite{Ro-Zuniga}, \cite{T-Z},
\cite{Zuniga1}, \cite{Zuniga2} and the references therein.

The connections between the Archimedean heat equations with number theory and
geometry are well-known and deep. Let us mention here, the connection with the
Riemann zeta function which drives naturally to trace-type formulas, see e.g.
\cite[and the references therein]{Arendt et al}, and the connection with the
Atiyah-Singer index Theorem, see e.g. \cite[and the references therein]%
{Gilkey}. The study of non-Archimedean counterparts of the above mentioned
matters is quite relevant, specially taking into account that the Connes and
Deninger programs to attack the Riemann hypothesis drive naturally to these
matters, see e.g. \cite{Connes}, \cite{Denninger}, \cite{Leic-1} and the
references therein. For instance several types of $p$-adic trace formulas have
been studied, see e.g. \cite{Alberverio et al}, \cite{Burnol}, \cite{Yasuda}%
\ and the references therein.

Nowadays there is no a theory of pseudodifferential operators over $p$-adic
ma\-nifolds comparable to the classical theory, see e.g. \cite{Shubin} and the
references therein. The $n$-dimensional unit ball is the simplest $p$-adic
compact manifold possible. From a topological point of view this ball is a
fractal, more precisely, it is topologically equivalent to a Cantor-like
subset of the real line, see e.g. \cite{A-K-S}, \cite{V-V-Z}. Currently, there
is a lot interest on spectral zeta functions attached to fractals see e.g.
\cite{Lal-Lapidius}, \cite{S-T}.

In this article we initiate the study of heat traces and spectral zeta
functions attached to certain $p$-adic Laplacians, denoted as $\boldsymbol{A}%
_{\beta}$, which are generalizations of the $p$-adic Laplacians introduced by
the authors in \cite{Ch-Z}, see also \cite{Ch-Z-1}. By using an approach
inspired on the work of Minakshisundaram and Pleijel, see \cite{Minak}%
-\cite{Minak-P}, we find a formula for the trace of the semigroup
$e^{-t\boldsymbol{A}_{\beta}}$ acting on the space of square integrable
functions supported on the unit ball with average zero, see Theorem
\ref{Theorem2A}. The trace of $e^{-t\boldsymbol{A}_{\beta}}$ is a $p$-adic
oscillatory integral of Laplace-type, we do not know the exact asymptotics of
this integral as $t$ tends to infinity, however, we obtain a good estimation
for its behavior at infinity, see Theorem \ref{Theorem2A} (ii). Several
unexpected mathematical situations occur in the $p$-adic setting. For
instance, the spectral zeta functions are $p$-adic Igusa-type integrals, see
Theorem \ref{Theorem3}. The $p$-adic spectral zeta functions studied here may
have infinitely many poles on the boundary of its domain of holomorphy, then,
to the best of our \ knowledge, the standard Ikehara Tauberian Theorems cannot
be applied to obtain the asymptotic behavior for the function counting the
eigenvalues of $\boldsymbol{A}_{\beta}$ less than or equal to $T\geq0$,
however, we are still able to find good estimates for this function, see
Theorem \ref{Theorem3} and Remark \ref{Nota3}, and Conjecture \ref{Conjecture}%
. The proofs require several results on certain `boundary value problems'
attached to $p$-adic heat equations associated with operators $\boldsymbol{A}%
_{\beta}$, \ see Proposition \ref{Prop1}, Theorem \ref{Theorem1} and
Proposition \ref{Theorem2}. Finally, Let us mention that our results and
techniques are completely different from those presented in \cite{Alberverio
et al}, \cite{Burnol}, \cite{Yasuda}\ .

\section{\label{Section1}Preliminaries}

In this section we fix the notation and collect some basic results on $p$-adic
analysis that we will use through the article. For a detailed exposition on
$p$-adic analysis the reader may consult \cite{A-K-S}, \cite{Taibleson},
\cite{V-V-Z}.

\subsection{The field of $p$-adic numbers}

Along this article $p$ will denote a prime number. The field of $p-$adic
numbers $\mathbb{Q}_{p}$ is defined as the completion of the field of rational
numbers $\mathbb{Q}$ with respect to the $p-$adic norm $|\cdot|_{p}$, which is
defined as
\[
|x|_{p}=%
\begin{cases}
0 & \text{if }x=0\\
p^{-\gamma} & \text{if }x=p^{\gamma}\dfrac{a}{b},
\end{cases}
\]
where $a$ and $b$ are integers coprime with $p$. The integer $\gamma
=ord_{p}(x):=ord(x)$, with $ord(0):=+\infty$, is called the\textit{\ }%
$p-$\textit{adic order of} $x$. We extend the $p-$adic norm to $\mathbb{Q}%
_{p}^{n}$ by taking%
\[
||x||_{p}:=\max_{1\leq i\leq n}|x_{i}|_{p},\qquad\text{for }x=(x_{1}%
,\dots,x_{n})\in\mathbb{Q}_{p}^{n}.
\]
We define $ord(x)=\min_{1\leq i\leq n}\{ord(x_{i})\}$, then $||x||_{p}%
=p^{-ord(x)}$.\ The metric space $\left(  \mathbb{Q}_{p}^{n},||\cdot
||_{p}\right)  $ is a complete ultrametric space. As a topological space
$\mathbb{Q}_{p}$\ is homeomorphic to a Cantor-like subset of the real line,
see e.g. \cite{A-K-S}, \cite{V-V-Z}.

Any $p-$adic number $x\neq0$ has a unique expansion of the form
\[
x=p^{ord(x)}\sum_{j=0}^{\infty}x_{j}p^{j},
\]
where $x_{j}\in\{0,1,2,\dots,p-1\}$ and $x_{0}\neq0$. By using this expansion,
we define \textit{the fractional part }$\{x\}_{p}$\textit{ of }$x\in
\mathbb{Q}_{p}$ as the rational number
\[
\{x\}_{p}=%
\begin{cases}
0 & \text{if }x=0\text{ or }ord(x)\geq0\\
p^{ord(x)}\sum_{j=0}^{-ord(x)-1}x_{j}p^{j} & \text{if }ord(x)<0.
\end{cases}
\]
In addition, any $x\in\mathbb{Q}_{p}^{n}\smallsetminus\left\{  0\right\}  $
can be represented uniquely as $x=p^{ord(x)}v\left(  x\right)  $ where
$\left\Vert v\left(  x\right)  \right\Vert _{p}=1$.

\subsection{Additive characters}

Set $\chi_{p}(y)=\exp(2\pi i\{y\}_{p})$ for $y\in\mathbb{Q}_{p}$. The map
$\chi_{p}(\cdot)$ is an additive character on $\mathbb{Q}_{p}$, i.e. a
continuous map from $\left(  \mathbb{Q}_{p},+\right)  $ into $S$ (the unit
circle considered as multiplicative group) satisfying $\chi_{p}(x_{0}%
+x_{1})=\chi_{p}(x_{0})\chi_{p}(x_{1})$, $x_{0},x_{1}\in\mathbb{Q}_{p}$. \ The
additive characters of $\mathbb{Q}_{p}$ form an Abelian group which is
isomorphic to $\left(  \mathbb{Q}_{p},+\right)  $, the isomorphism is given by
$\xi\rightarrow\chi_{p}(\xi x)$, see e.g. \cite[Section 2.3]{A-K-S}.

\subsection{Topology of $\mathbb{Q}_{p}^{n}$}

For $r\in\mathbb{Z}$, denote by $B_{r}^{n}(a)=\{x\in\mathbb{Q}_{p}
^{n};||x-a||_{p}\leq p^{r}\}$ \textit{the ball of radius }$p^{r}$ \textit{with
center at} $a=(a_{1},\dots,a_{n})\in\mathbb{Q}_{p}^{n}$, and take $B_{r}%
^{n}(0):=B_{r}^{n}$. Note that $B_{r}^{n}(a)=B_{r}(a_{1})\times\cdots\times
B_{r}(a_{n})$, where $B_{r}(a_{i}):=\{x\in\mathbb{Q}_{p};|x_{i}-a_{i}|_{p}\leq
p^{r}\}$ is the one-dimensional ball of radius $p^{r}$ with center at
$a_{i}\in\mathbb{Q}_{p}$. The ball $B_{0}^{n}$ equals the product of $n$
copies of $B_{0}=\mathbb{Z}_{p}$, \textit{the ring of }$p- $\textit{adic
integers}. We also denote by $S_{r}^{n}(a)=\{x\in\mathbb{Q}_{p}^{n}%
;||x-a||_{p}=p^{r}\}$ \textit{the sphere of radius }$p^{r}$ \textit{with
center at} $a=(a_{1},\dots,a_{n})\in\mathbb{Q}_{p}^{n}$, and take $S_{r}%
^{n}(0):=S_{r}^{n}$. We notice that $S_{0}^{1}=\mathbb{Z}_{p}^{\times}$ (the
group of units of $\mathbb{Z}_{p}$), but $\left(  \mathbb{Z}_{p}^{\times
}\right)  ^{n}\subsetneq S_{0}^{n}$. The balls and spheres are both open and
closed subsets in $\mathbb{Q}_{p}^{n}$. In addition, two balls in
$\mathbb{Q}_{p}^{n}$ are either disjoint or one is contained in the other.

As a topological space $\left(  \mathbb{Q}_{p}^{n},||\cdot||_{p}\right)  $ is
totally disconnected, i.e. the only connected \ subsets of $\mathbb{Q}_{p}%
^{n}$ are the empty set and the points. A subset of $\mathbb{Q}_{p}^{n}$ is
compact if and only if it is closed and bounded in $\mathbb{Q}_{p}^{n}$, see
e.g. \cite[Section 1.3]{V-V-Z}, or \cite[Section 1.8]{A-K-S}. The balls and
spheres are compact subsets. Thus $\left(  \mathbb{Q}_{p}^{n},||\cdot
||_{p}\right)  $ is a locally compact topological space.

We will use $\Omega\left(  p^{-r}||x-a||_{p}\right)  $ to denote the
characteristic function of the ball $B_{r}^{n}(a)$. For more general sets, we
will use the notation $1_{A}$ for the characteristic function of a set $A$.

\section{The Bruhat-Schwartz space and the Fourier transform}

A complex-valued function $\varphi$ defined on $\mathbb{Q}_{p}^{n}$ is
\textit{called locally constant} if for any $x\in\mathbb{Q}_{p}^{n}$ there
exist an integer $l(x)\in\mathbb{Z}$ such that%
\begin{equation}
\varphi(x+x^{\prime})=\varphi(x)\text{ for }x^{\prime}\in B_{l(x)}^{n}.
\label{local_constancy}%
\end{equation}

A function $\varphi:\mathbb{Q}_{p}^{n}\rightarrow\mathbb{C}$ is called a
\textit{Bruhat-Schwartz function (or a test function)} if it is locally
constant with compact support. Any test function can be represented as a
linear combination, with complex coefficients, of characteristic functions of
balls. The $\mathbb{C}$-vector space of Bruhat-Schwartz functions is denoted
by $\mathcal{D}(\mathbb{Q}_{p}^{n})$. For $\varphi\in\mathcal{D}%
(\mathbb{Q}_{p}^{n})$, the largest number $l=l(\varphi)$ satisfying
(\ref{local_constancy}) is called \textit{the exponent of local constancy (or
the parameter of constancy) of} $\varphi$.

If $U$ is an open subset of $\mathbb{Q}_{p}^{n}$, $\mathcal{D}(U)$ denotes the
space of test functions with supports contained in $U$, then $\mathcal{D}(U)$
is dense in
\[
L^{\rho}\left(  U\right)  =\left\{  \varphi:U\rightarrow\mathbb{C};\left\Vert
\varphi\right\Vert _{\rho}=\left\{  \int_{U}\left\vert \varphi\left(
x\right)  \right\vert ^{\rho}d^{n}x\right\}  ^{\frac{1}{\rho}}<\infty\right\}
,
\]
where $d^{n}x$ is the Haar measure on $\mathbb{Q}_{p}^{n}$ normalized by the
condition $vol(B_{0}^{n})\allowbreak=1$, for $1\leq\rho<\infty$, see e.g.
\cite[Section 4.3]{A-K-S}.

\subsection{The Fourier transform of test functions}

Given $\xi=(\xi_{1},\dots,\xi_{n})$ and $y=(x_{1},\dots,x_{n})\in
\mathbb{Q}_{p}^{n}$, we set $\xi\cdot x:=\sum_{j=1}^{n}\xi_{j}x_{j}$. The
Fourier transform of $\varphi\in\mathcal{D}(\mathbb{Q}_{p}^{n})$ is defined
as
\[
(\mathcal{F}\varphi)(\xi)=\int_{\mathbb{Q}_{p}^{n}}\chi_{p}(\xi\cdot
x)\varphi(x)d^{n}x\quad\text{for }\xi\in\mathbb{Q}_{p}^{n},
\]
where $d^{n}x$ is the normalized Haar measure on $\mathbb{Q}_{p}^{n}$. The
Fourier transform is a linear isomorphism from $\mathcal{D}(\mathbb{Q}_{p}%
^{n})$ onto itself satisfying $(\mathcal{F}(\mathcal{F}\varphi))(\xi
)=\varphi(-\xi)$, see e.g. \cite[Section 4.8]{A-K-S}. We will also use the
notation $\mathcal{F}_{x\rightarrow\xi}\varphi$ and $\widehat{\varphi}$\ for
the Fourier transform of $\varphi$.

\section{\label{Sect2}A Class of $p-$adic Laplacians}

Take \ $\mathbb{R}_{+}:=\left\{  x\in\mathbb{R};x\geq0\right\}  $, and fix a
function%
\[
A:\mathbb{Q}_{p}^{n}\rightarrow\mathbb{R}_{+}%
\]
satisfying the following properties:

\noindent(i) $A\left(  \xi\right)  $ is a radial function, i.e $A\left(
\xi\right)  =g\left(  \left\Vert \xi\right\Vert _{p}\right)  $ for some
$g:\mathbb{R}_{+}\rightarrow\mathbb{R}_{+}$, by simplicity we use the notation
$A\left(  \xi\right)  =A(\left\Vert \xi\right\Vert _{p})$;

\noindent(ii) there exist constants $C_{0}$, $C_{1}>0$ and $\beta>0$ such
that
\begin{equation}
C_{0}\left\Vert \xi\right\Vert _{p}^{\beta}\leq A(\xi)\leq C_{1}\left\Vert
\xi\right\Vert _{p}^{\beta}\text{, for }x\in\mathbb{Q}_{p}^{n}\text{.}
\label{1w}%
\end{equation}
Taking into account that $\beta$ in (\ref{1w}) is unique, we use the notation
$A_{\beta}(\Vert\xi\Vert_{p})=A(\Vert\xi\Vert_{p})$.

We define the pseudodifferential operator $\boldsymbol{A}_{\beta}$ by
\begin{equation}
\left(  \boldsymbol{A}_{\beta}\varphi\right)  \left(  x\right)  =\mathcal{F}%
_{\xi\rightarrow x}^{-1}\left[  A_{{\beta}}\left(  \xi\right)  \mathcal{F}%
_{x\rightarrow\xi}\varphi\right]  \text{, for }\varphi\in\mathcal{D}\left(
\mathbb{Q}_{p}^{n}\right)  \text{.} \label{pseudo}%
\end{equation}
We will call $A_{\beta}\left(  \xi\right)  $ \textit{the symbol of}
$\boldsymbol{A}_{\beta}$. The operator $\boldsymbol{A}_{\beta}$ extends to an
unbounded and densely defined operator in $L^{2}\left(  \mathbb{Q}_{p}%
^{n}\right)  $ with domain
\begin{equation}
Dom(\boldsymbol{A}_{\beta})=\left\{  \varphi\in L^{2};A_{\beta}(\xi
)\mathcal{F}\varphi\in L^{2}\right\}  . \label{Dom_W}%
\end{equation}
In addition:

\noindent(i) $\left(  \boldsymbol{A}_{\beta},Dom(\boldsymbol{A}_{\beta
})\right)  $ is self-adjoint and positive operator;

\noindent(ii) $-\boldsymbol{A}_{\beta}$ is the infinitesimal generator of a
contraction $C_{0}-$semigroup, cf. \cite[Proposition 3.3]{Ch-Z}.

We attach to operator $\boldsymbol{A}_{\beta}$ the following `heat equation':%

\[
\left\{
\begin{array}
[c]{ll}%
\frac{\partial u(x,t)}{\partial t}+\boldsymbol{A}_{\beta}u(x,t)=0, &
x\in\mathbb{Q}_{p}^{n},\,\,\,t\in\left[  0,\infty\right) \\
& \\
u\left(  x,0\right)  =u_{0}(x), & u_{0}(x)\in Dom(\boldsymbol{A}_{\beta}).
\end{array}
\right.
\]
This initial value problem has a unique solutions given by
\[
u(x,t)=\int\limits_{\mathbb{Q}_{p}^{n}}Z(x-y,t)u_{0}(y)d^{n}y,
\]
where
\[
Z(x,t;\boldsymbol{A}_{\beta}):=Z(x,t)=\int\limits_{\mathbb{Q}_{p}^{n}}\chi
_{p}(-\xi\cdot x)e^{-tA_{\beta}(\xi)}d^{n}\xi,\,\,\,\text{for}%
\,\,\,t>0,\,\,\,x\in\mathbb{Q}_{p}^{n},
\]
cf. \cite[Theorem 6.5]{Ch-Z}. The function $Z(x,t)$ is called the heat kernel
associated with operator $\boldsymbol{A}_{\beta}$.

\subsection{Operators $\boldsymbol{W}_{\alpha}$}

The class of operators $\boldsymbol{A}_{\beta}$ includes the class of
operators $\boldsymbol{W_{\alpha}}$ studied by the authors in \cite{Ch-Z}, see
also \cite{Ch-Z-1}. In addition, most of the results on $\boldsymbol{W_{\alpha
}}$ operators are valid for $\boldsymbol{A}_{\beta}$ operators. We review
briefly the definition of these operators. Fix a function%
\[
w_{\alpha}:\mathbb{Q}_{p}^{n}\rightarrow\mathbb{R}_{+}%
\]
satisfying the following properties:

\noindent(i) $w_{\alpha}\left(  y\right)  $ is a radial i.e. $w_{\alpha
}(y)=w_{\alpha}(\left\Vert y\right\Vert _{p})$;

\noindent(ii) $w_{\alpha}(\left\Vert y\right\Vert _{p})$ is continuous and
increasing function of $\left\Vert y\right\Vert _{p}$;

\noindent(iii) $w_{\alpha}\left(  y\right)  =0$ if and only if $y=0$;

\noindent(iv) there exist constants $C_{0},C_{1}>0$ and $\alpha>n$ such that
\[
C_{0}\left\Vert y\right\Vert _{p}^{\alpha}\leq w_{\alpha}(\left\Vert
y\right\Vert _{p})\leq C_{1}\left\Vert y\right\Vert _{p}^{\alpha}\text{, for
}x\in\mathbb{Q}_{p}^{n}\text{.}%
\]
We now define the operator
\[
(\boldsymbol{W}_{\alpha}\varphi)(x)=\kappa{\int\limits_{\mathbb{Q}_{p}^{n}}%
}\frac{\varphi\left(  x-y\right)  -\varphi\left(  x\right)  }{w_{\alpha
}\left(  \Vert y\Vert_{p}\right)  }d^{n}y\text{, for }\varphi\in
\mathcal{D}\left(  \mathbb{Q}_{p}^{n}\right)  \text{,}%
\]
where $\kappa$ is a positive constant. The operator $\boldsymbol{W}_{\alpha}$
is pseudodifferential, more precisely, if%
\[
A_{w_{\alpha}}\left(  \xi\right)  :={\int\limits_{\mathbb{Q}_{p}^{n}}}%
\frac{1-\chi_{p}\left(  y\cdot\xi\right)  }{w_{\alpha}\left(  \Vert y\Vert
_{p}\right)  }d^{n}y,
\]
then
\[
\left(  \boldsymbol{W_{\alpha}}\varphi\right)  \left(  x\right)
=-\kappa\mathcal{F}_{\xi\rightarrow x}^{-1}\left[  A_{w_{\alpha}}\left(
\xi\right)  \mathcal{F}_{x\rightarrow\xi}\varphi\right]  \text{, for }%
\varphi\in\mathcal{D}\left(  \mathbb{Q}_{p}^{n}\right)  \text{.}%
\]
The function $A_{w_{\alpha}}\left(  \xi\right)  $ is radial (so we use the
notations $A_{w_{\alpha}}\left(  \xi\right)  =A_{w_{\alpha}}\left(  \Vert
\xi\Vert_{p}\right)  $), continuous, non-negative, $A_{w_{\alpha}}\left(
0\right)  =0$, and it satisfies
\[
C_{0}^{\prime}\left\Vert \xi\right\Vert _{p}^{\alpha-n}\leq A_{w_{\alpha}%
}(\left\Vert \xi\right\Vert _{p})\leq C_{1}^{\prime}\left\Vert \xi\right\Vert
_{p}^{\alpha-n}\text{, for }x\in\mathbb{Q}_{p}^{n}\text{,}%
\]
cf. \cite[Lemmas 3.1, 3.2, 3.3]{Ch-Z}. The operator $\boldsymbol{W}_{\alpha}$
extends to an unbounded and densely defined operator in $L^{2}\left(
\mathbb{Q}_{p}^{n}\right)  $.

\subsection{Examples}

\begin{example}
The Taibleson operator is defined as%
\[
\left(  D_{T}^{\beta}\phi\right)  \left(  x\right)  =\mathcal{F}%
_{\xi\rightarrow x}^{-1}\left(  \left\Vert \xi\right\Vert _{p}^{\beta
}\mathcal{F}_{x\rightarrow\xi}\phi\right)  \text{, with }\beta>0\text{ and
}\phi\in\mathcal{D}\left(  \mathbb{Q}_{p}^{n}\right)  .
\]
cf. \cite{Ro-Zuniga}, \cite[Section 9.2.2]{A-K-S}.
\end{example}

\begin{example}
Take $A_{\beta}\left(  \xi\right)  =\left\Vert \xi\right\Vert _{p}^{\beta
}\left\{  B-Ae^{-\left\Vert \xi\right\Vert _{p}}\right\}  $ with $B>A>0$. Then
$A_{\beta}\left(  \xi\right)  $ satisfies all the requirements announced at
the beginning of this section. In general, if $f:\mathbb{Q}_{p}^{n}%
\rightarrow\mathbb{R}_{+}$ is a radial function satisfying
\[
0<\inf_{\xi\in\mathbb{Q}_{p}^{n}}f\left(  \left\Vert \xi\right\Vert
_{p}\right)  <\sup_{\xi\in\mathbb{Q}_{p}^{n}}f\left(  \left\Vert
\xi\right\Vert _{p}\right)  <\infty,
\]
then $A_{\beta}(\left\Vert \xi\right\Vert _{p})f\left(  \left\Vert
\xi\right\Vert _{p}\right)  $ satisfies all the requirements announced at the
beginning of this section.
\end{example}

\section{Lizorkin Spaces, Eigenvalues and Eigenfunctions for $\boldsymbol{A}%
_{\beta}$ Operators}

We set $\mathcal{L}_{0}(\mathbb{Q}_{p}^{n}):=\{\varphi\in\mathcal{D}\left(
\mathbb{Q}_{p}^{n}\right)  ;\,\,\widehat{\varphi}(0)=0\}$. The $\mathbb{C}%
-$vector space $\mathcal{L}_{0}$\ is called the $p-$adic Lizorkin space of
second class. We recall that $\mathcal{L}_{0}$ is dense in $L^{2}$, cf.
\cite[Theorem 7.4.3]{A-K-S}, and that $\varphi\in\mathcal{L}_{0}\left(
\mathbb{Q}_{p}^{n}\right)  $ if and only if
\begin{equation}
\int_{\mathbb{Q}_{p}^{n}}\varphi(x)d^{n}x=0. \label{Lizorkin_property}%
\end{equation}
Consider the operator $\left(  \boldsymbol{\boldsymbol{A}}_{\beta}%
\varphi\right)  \left(  x\right)  =\mathcal{F}_{\xi\rightarrow x}^{-1}\left[
A_{\beta}\left(  \xi\right)  \mathcal{F}_{x\rightarrow\xi}\varphi\right]  $ on
$\mathcal{L}_{0}(\mathbb{Q}_{p}^{n})$, then $\boldsymbol{A}_{\beta}$ is
densely defined on $L^{2}$, and $\boldsymbol{\boldsymbol{A}}_{\beta
}:\mathcal{L}_{0}(\mathbb{Q}_{p}^{n})\rightarrow\mathcal{L}_{0}(\mathbb{Q}%
_{p}^{n})$ is a well-defined linear operator.

We set $\mathcal{L}_{0}(\mathbb{Z}_{p}^{n}):=\{\varphi\in\mathcal{L}%
_{0}(\mathbb{Q}_{p}^{n});\,$supp$\,\varphi\subseteq\mathbb{Z}_{p}^{n}\}$, and
define
\[
L_{0}^{2}\left(  \mathbb{Z}_{p}^{n},d^{n}x\right)  :=L_{0}^{2}\left(
\mathbb{Z}_{p}^{n}\right)  =\left\{  f\in L^{2}\left(  \mathbb{Z}_{p}%
^{n},d^{n}x\right)  ;\int_{\mathbb{Z}_{p}^{n}}f(x)d^{n}x=0\right\}  .
\]
Notice that, since $L_{0}^{2}\left(  \mathbb{Z}_{p}^{n}\right)  $ is the
orthogonal complement in $L^{2}\left(  \mathbb{Z}_{p}^{n}\right)  $ of the
space generated by the characteristic function of $\mathbb{Z}_{p}^{n}$, then
$L_{0}^{2}\left(  \mathbb{Z}_{p}^{n}\right)  $ is a Hilbert space.

Then $\mathcal{L}_{0}(\mathbb{Z}_{p}^{n})$ is dense in $L_{0}^{2}\left(
\mathbb{Z}_{p}^{n}\right)  $. Indeed, set
\[
\delta_{k}\left(  x\right)  :=p^{nk}\Omega\left(  p^{k}\left\Vert x\right\Vert
_{p}\right)  \text{, for }k\in\mathbb{N}.
\]
Then $\int_{\mathbb{Q}_{p}^{n}}\delta_{k}\left(  x\right)  d^{n}x=1$ for any
$k$, and take $f\in L_{0}^{2}\left(  \mathbb{Z}_{p}^{n}\right)  $, then
$f_{k}=f\ast\delta_{k}\in\mathcal{L}_{0}(\mathbb{Z}_{p}^{n})$, and $f_{k}$
$\underrightarrow{\left\Vert \cdot\right\Vert _{L^{2}}}$\ $f$.

Set
\[
\omega_{\gamma bk}(x):=p^{-\frac{n\gamma}{2}}\chi_{p}(p^{-1}k\cdot(p^{\gamma
}x-b))\Omega(\Vert p^{\gamma}x-b\Vert_{p}),
\]
where $\gamma\in\mathbb{Z}$, $b\in(\mathbb{Q}_{p}/\mathbb{Z}_{p})^{n}$,
$k=(k_{1},\dots,k_{n})$ with $k_{i}\in\{0,\dots p-1\}$ for $i=1,\ldots,n$, and
$k\neq(0,\dots,0)$.

\begin{lemma}
\label{eigenfunction} With the above notation,
\[
(\boldsymbol{A}_{\beta}\omega_{\gamma bk})(x)=\lambda_{\gamma bk}%
\omega_{\gamma bk}(x)
\]
with
\[
\lambda_{\gamma bk}=A_{\beta}({p}^{1-\gamma}).
\]
Moreover, $\int_{\mathbb{Q}_{p}^{n}}\omega_{\gamma bk}(x)d^{n}x=0$ and
$\{\omega_{\gamma bk}(x)\}_{\gamma bk}$ forms a complete orthogonal basis of
$L^{2}(\mathbb{Q}_{p}^{n},d^{n}x)$.
\end{lemma}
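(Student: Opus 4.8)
The plan is to move everything to the Fourier side, where $\boldsymbol{A}_{\beta}$ acts by multiplication by the radial symbol $A_{\beta}(\|\xi\|_{p})$. First I would compute $\widehat{\omega_{\gamma bk}}$ directly from the definition. Substituting $u=p^{\gamma}x-b$ and using the elementary identity $\int_{\mathbb{Z}_{p}^{n}}\chi_{p}(\eta\cdot u)\,d^{n}u=\Omega(\|\eta\|_{p})$, one gets an expression of the form $\widehat{\omega_{\gamma bk}}(\xi)=c\,\chi_{p}(p^{-\gamma}\xi\cdot b)\,\Omega(\|p^{-\gamma}\xi+p^{-1}k\|_{p})$ for a suitable constant $c$. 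The decisive point is the support: since $k\in\{0,\dots,p-1\}^{n}\setminus\{0\}$, at least one coordinate of $p^{-1}k$ has $p$-adic norm $p$, and the condition $p^{-\gamma}\xi+p^{-1}k\in\mathbb{Z}_{p}^{n}$ forces $\|p^{-\gamma}\xi\|_{p}=p$, so $\widehat{\omega_{\gamma bk}}$ is supported exactly on the sphere $\|\xi\|_{p}=p^{1-\gamma}$.

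From this the first two assertions are immediate. On the support the symbol is constant, $A_{\beta}(\xi)=A_{\beta}(p^{1-\gamma})$, whence $\boldsymbol{A}_{\beta}\omega_{\gamma bk}=\mathcal{F}^{-1}[A_{\beta}(\xi)\widehat{\omega_{\gamma bk}}]=A_{\beta}(p^{1-\gamma})\,\omega_{\gamma bk}$, giving the eigenvalue $\lambda_{\gamma bk}=A_{\beta}(p^{1-\gamma})$. The mean-zero property is simply $\int_{\mathbb{Q}_{p}^{n}}\omega_{\gamma bk}\,d^{n}x=\widehat{\omega_{\gamma bk}}(0)=0$, since $0$ does not lie on the sphere $\|\xi\|_{p}=p^{1-\gamma}$.

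For orthogonality I would argue by cases. If $\gamma\neq\gamma'$ the Fourier transforms live on disjoint spheres, so by Plancherel the inner product vanishes. If $\gamma=\gamma'$ but $b\neq b'$ in $(\mathbb{Q}_{p}/\mathbb{Z}_{p})^{n}$, the functions have disjoint supports in $x$-space, namely the distinct balls $p^{-\gamma}(b+\mathbb{Z}_{p}^{n})$ and $p^{-\gamma}(b'+\mathbb{Z}_{p}^{n})$; here I would also record that replacing $b$ by a representative $b+m$ with $m\in\mathbb{Z}_{p}^{n}$ only multiplies $\omega_{\gamma bk}$ by the constant phase $\chi_{p}(-p^{-1}k\cdot m)$, so that $(\mathbb{Q}_{p}/\mathbb{Z}_{p})^{n}$ is indeed the correct index set. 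Finally, if $\gamma=\gamma'$ and $b=b'$ but $k\neq k'$, a change of variables reduces the inner product to $\int_{\mathbb{Z}_{p}^{n}}\chi_{p}(p^{-1}(k-k')\cdot u)\,d^{n}u=\Omega(\|p^{-1}(k-k')\|_{p})$, which is $0$ because some coordinate of $k-k'$ is a nonzero element of $\{-(p-1),\dots,p-1\}$, hence a unit; the same computation with $k=k'$ yields $\|\omega_{\gamma bk}\|_{2}=1$.

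The main obstacle is completeness, the one statement that does not reduce to a short computation. The plan is again to pass to the Fourier side: as $\gamma$ ranges over $\mathbb{Z}$ the spheres $\|\xi\|_{p}=p^{1-\gamma}$ partition the full-measure set $\mathbb{Q}_{p}^{n}\setminus\{0\}$, so it suffices to prove that for each fixed $\gamma$ the functions $\widehat{\omega_{\gamma bk}}$ span $L^{2}$ of the corresponding sphere. For fixed $\gamma$ the support of $\widehat{\omega_{\gamma bk}}$ depends only on $k$, and as $k$ runs over $\{0,\dots,p-1\}^{n}\setminus\{0\}$ these supports tile the sphere; on each such piece (a ball) the remaining parameter $b\in(\mathbb{Q}_{p}/\mathbb{Z}_{p})^{n}$ produces exactly the additive characters $\chi_{p}(p^{-\gamma}\xi\cdot b)$. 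Thus, scale by scale and piece by piece, the claim becomes the completeness of the additive characters in $L^{2}$ of a compact ball, i.e. the standard $p$-adic Fourier-series statement furnished by Pontryagin duality. I expect the bookkeeping that matches tiles, characters, and scales without omission or double counting to be the delicate part; alternatively, since the $\omega_{\gamma bk}$ are precisely the multidimensional Kozyrev wavelets, completeness may be quoted directly from the theory of $p$-adic wavelets in \cite{A-K-S}.
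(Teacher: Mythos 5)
Your proof is correct, but it takes a genuinely more self-contained route than the paper. The paper's proof is essentially a citation: it invokes Theorems 9.4.5 and 8.9.3 of \cite{A-K-S} (the eigenfunction property of Kozyrev-type wavelets for pseudodifferential operators with radial symbols, and their completeness in $L^{2}(\mathbb{Q}_{p}^{n})$), and the only computation it records is the invariance $A_{\beta}(\Vert p^{\gamma}(-p^{-1}k+\eta)\Vert_{p})=A_{\beta}(p^{1-\gamma})$ for all $\eta\in\mathbb{Z}_{p}^{n}$ --- which is exactly your ``decisive point'' that $\widehat{\omega_{\gamma bk}}$ is supported on the sphere $\Vert\xi\Vert_{p}=p^{1-\gamma}$, where the radial symbol is constant. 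Everything else you work out from scratch, and correctly: the explicit transform $\widehat{\omega_{\gamma bk}}(\xi)=p^{n\gamma/2}\chi_{p}(p^{-\gamma}\xi\cdot b)\,\Omega(\Vert p^{-\gamma}\xi+p^{-1}k\Vert_{p})$ (your unspecified constant is $c=p^{n\gamma/2}$), the case-by-case orthonormality, and the useful remark that replacing $b$ by $b+m$, $m\in\mathbb{Z}_{p}^{n}$, only introduces the phase $\chi_{p}(-p^{-1}k\cdot m)$, a point the paper silently absorbs into the citation. Your completeness sketch also closes as you anticipate: the spheres partition the full-measure set $\mathbb{Q}_{p}^{n}\setminus\{0\}$; for fixed $\gamma$ the $p^{n}-1$ balls $p^{\gamma}(-p^{-1}k+\mathbb{Z}_{p}^{n})$ tile the sphere; and after the substitution $\eta=p^{-\gamma}\xi+p^{-1}k$ the $b$-family becomes, up to a constant phase, precisely the full character group of $\mathbb{Z}_{p}^{n}$ (dual to $(\mathbb{Q}_{p}/\mathbb{Z}_{p})^{n}$), whose completeness in $L^{2}(\mathbb{Z}_{p}^{n})$ is standard Pontryagin duality --- in substance this reproves the quoted Theorem 8.9.3. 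What your approach buys is independence from \cite{A-K-S} and an explicit record of the support and phase bookkeeping; what the paper's approach buys is brevity, and your closing fallback --- quoting completeness of the multidimensional $p$-adic wavelet basis from \cite{A-K-S} --- is literally what the paper does.
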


\begin{proof}
The result follows from Theorems 9.4.5 and 8.9.3 in \cite{A-K-S}, by using the
fact that $A_{\beta}$ satisfies $A_{\beta}(\left\Vert {p}^{\gamma}%
(-p^{-1}k+\eta)\right\Vert _{p})=A_{\beta}(\left\Vert {p}^{\gamma
-1}k\right\Vert _{p})=A_{\beta}({p}^{1-\gamma})$, for all $\eta\in
\mathbb{Z}_{p}^{n}$.
\end{proof}

\begin{remark}
\label{lambda} \noindent(i) Notice that $\boldsymbol{A}_{\beta}$ has
eigenvalues of infinity multiplicity. Now, if consider only eigenfunctions
satisfying supp $\omega_{\gamma bk}(x)\subset\mathbb{Z}_{p}^{n}$, then
necessarily $\gamma\leq0$ and $b\in p^{\gamma}\mathbb{Z}_{p}^{n}%
/\mathbb{Z}_{p}^{n}$. For $\gamma$ fixed there are only a finite number of
eigenfunctions $\omega_{\gamma bk}$ satisfying $\boldsymbol{A}_{\beta}%
\omega_{\gamma bk}=\lambda_{\gamma bk}\omega_{\gamma bk}$, i.e. the
multiplicities of the $\lambda_{\gamma bk}$ are finite. Therefore we can
number these eigenfunctions and eigenvalues in the form $\omega_{m}$,
$\lambda_{m}$ with $m\in\mathbb{N\smallsetminus}\left\{  0\right\}  $ such
that $\lambda_{m}\leq\lambda_{m^{\prime}}$ for $m\leq m^{\prime}$.

\noindent(ii) Notice that any $\omega_{m}(x)$ is orthogonal to $\Omega\left(
\left\Vert x\right\Vert _{p}\right)  $, thus $\left\{  \omega_{m}(x)\right\}
_{m\in\mathbb{N\smallsetminus}\left\{  0\right\}  }$ is not a complete
orthonormal basis of $L^{2}(\mathbb{Z}_{p}^{n},d^{n}x)$. We now recall that
$\mathcal{L}_{0}(\mathbb{Z}_{p}^{n})$ is dense in $L_{0}^{2}\left(
\mathbb{Z}_{p}^{n}\right)  $, and since the algebraic span of $\left\{
\omega_{m}(x)\right\}  _{m\in\mathbb{N\smallsetminus}\left\{  0\right\}  }$
contains $\mathcal{L}_{0}(\mathbb{Z}_{p}^{n})$, then $\left\{  \omega
_{m}(x)\right\}  _{m\in\mathbb{N\smallsetminus}\left\{  0\right\}  }$ is a
complete orthonormal basis of $L_{0}^{2}(\mathbb{Z}_{p}^{n})$.
\end{remark}

\begin{proposition}
\label{Prop1}Consider $\left(  \boldsymbol{A}_{\beta},\mathcal{L}%
_{0}(\mathbb{Z}_{p}^{n})\right)  $ and the eigenvalue problem:
\begin{equation}%
\begin{array}
[c]{ll}%
\boldsymbol{A}_{\beta}u=\lambda u, & \lambda>0,\,\,u\in\mathcal{L}%
_{0}(\mathbb{Z}_{p}^{n}).
\end{array}
\label{Spectral}%
\end{equation}
Then the function $u\left(  x\right)  =\omega_{m}(x)$\thinspace is a solution
of (\ref{Spectral}) corresponding to $\lambda=\lambda_{m}$, for $m\in
\mathbb{N\smallsetminus}\left\{  0\right\}  $. In addition, the spectrum has
the form
\[
0<\lambda_{1}\leq\lambda_{2}\leq\dots\leq\lambda_{m}\leq\dots\,\,\text{with}%
\,\,\,\lambda_{m}\uparrow+\infty,
\]
where all the eigenvalues have finite multiplicity, and $\{\omega_{m}(x)\}$,
with $m\in\mathbb{N\smallsetminus}\left\{  0\right\}  $, is a complete
orthonormal basis of $L_{0}^{2}(\mathbb{Z}_{p}^{n},d^{n}x)$.
\end{proposition}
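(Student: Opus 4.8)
The plan is to assemble Proposition \ref{Prop1} from the pieces already in place, since almost every ingredient has been prepared in Lemma \ref{eigenfunction} and Remark \ref{lambda}. First I would verify that each $\omega_m$ solves (\ref{Spectral}): by Lemma \ref{eigenfunction} we have $\boldsymbol{A}_\beta \omega_{\gamma b k} = \lambda_{\gamma bk}\omega_{\gamma bk}$ with $\lambda_{\gamma bk} = A_\beta(p^{1-\gamma})$, and by Remark \ref{lambda}(i) the relabelling $\omega_{\gamma bk}\mapsto \omega_m$, $\lambda_{\gamma bk}\mapsto\lambda_m$ is exactly the enumeration of those eigenfunctions supported in $\mathbb{Z}_p^n$ (forcing $\gamma\le 0$ and $b\in p^\gamma\mathbb{Z}_p^n/\mathbb{Z}_p^n$). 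Since each such $\omega_m$ lies in $\mathcal{L}_0(\mathbb{Z}_p^n)$ (it has zero average by Lemma \ref{eigenfunction} and compact support in $\mathbb{Z}_p^n$), it is a genuine solution of the eigenvalue problem on the stated domain.

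Next I would establish the structure of the spectrum. Positivity $\lambda_m>0$ follows from the lower bound in (\ref{1w}): since $A_\beta(p^{1-\gamma})\ge C_0 p^{(1-\gamma)\beta}$ and $p^{1-\gamma}\ge p$ (because $\gamma\le 0$), every eigenvalue is bounded below by $C_0 p^\beta>0$. For the ordering and the divergence $\lambda_m\uparrow+\infty$, I would argue that for each fixed $\gamma\le 0$ the number of admissible pairs $(b,k)$ is finite (this is the finite-multiplicity statement of Remark \ref{lambda}(i)), so the eigenvalues accumulate only as $\gamma\to-\infty$; but as $\gamma\to-\infty$ the exponent $1-\gamma\to+\infty$, whence $A_\beta(p^{1-\gamma})\to\infty$ by the lower bound in (\ref{1w}). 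This simultaneously gives finite multiplicity of each $\lambda_m$, the monotone ordering $\lambda_1\le\lambda_2\le\cdots$, and $\lambda_m\to+\infty$. The enumeration itself is well-defined precisely because only finitely many eigenvalues fall below any given threshold.

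Finally, for completeness of $\{\omega_m\}$ in $L_0^2(\mathbb{Z}_p^n)$, I would invoke Remark \ref{lambda}(ii) directly: the density of $\mathcal{L}_0(\mathbb{Z}_p^n)$ in $L_0^2(\mathbb{Z}_p^n)$ together with the fact that the algebraic span of $\{\omega_m\}$ contains $\mathcal{L}_0(\mathbb{Z}_p^n)$ forces the orthonormal system $\{\omega_m\}$ to be complete. Orthonormality is inherited from the global orthogonal basis $\{\omega_{\gamma bk}\}$ of Lemma \ref{eigenfunction}, after checking the normalization constant $p^{-n\gamma/2}$ indeed makes $\|\omega_{\gamma bk}\|_{L^2}=1$.

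The one step that requires genuine care rather than mere citation is the counting argument yielding simultaneously the finite multiplicity and the divergence of eigenvalues: I expect the main obstacle to be confirming that distinct $\gamma$ really do produce distinct eigenvalue scales (so that accumulation cannot occur at a finite value) and that the lower bound in (\ref{1w}) — rather than the mere radiality of $A_\beta$ — is what is genuinely needed to force $\lambda_m\uparrow+\infty$. Everything else is a bookkeeping reassembly of the preceding lemma and remark.
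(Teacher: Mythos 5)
Your proposal is correct and follows exactly the route of the paper, whose entire proof is the one-line citation of Lemma \ref{eigenfunction}, Remark \ref{lambda}, and the bound (\ref{1w}); you have simply made explicit the bookkeeping the authors leave implicit. Your flagged concern is handled properly: distinct $\gamma$ need not give distinct eigenvalues, but the lower bound $A_{\beta}(p^{1-\gamma})\geq C_{0}p^{(1-\gamma)\beta}$ together with the finiteness of admissible $(b,k)$ for each fixed $\gamma$ is, as you say, exactly what forces finite multiplicity and $\lambda_{m}\uparrow+\infty$.
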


\begin{proof}
The result follows from Lemma \ref{eigenfunction}, Remark \ref{lambda} and
(\ref{1w}).
\end{proof}

\begin{definition}
\label{Definition_zeta}We define the spectral zeta function attach to
eigenvalue problem (\ref{Spectral}) as
\[
\zeta(s;\boldsymbol{A}_{\beta},\mathcal{L}_{0}(\mathbb{Z}_{p}^{n}%
)):=\zeta(s;\boldsymbol{A}_{\beta})=\sum\limits_{m=1}^{\infty}\frac{1}%
{\lambda_{m}^{s}},\,s\in\mathbb{C}.
\]

\end{definition}

Later on we will show that $\zeta(s;\boldsymbol{A}_{\beta})$ converges if
$\operatorname{Re}(s)$ is sufficiently big, and it does not depend on the
basis $\{\omega_{m}(x)\}$ used in its computation. By abuse of language (or
following the classical literature, see \cite{Voros}), we will say that
$\zeta(s;\boldsymbol{A}_{\beta})$ is the spectral zeta function of operator
$\boldsymbol{A}_{\beta}$.

\subsection{Example\label{Example1}}

We compute $\zeta(s;D_{T}^{\beta})$. We first note that%
\[
D_{T}^{\beta}\omega_{\gamma bk}=p^{-\left(  \gamma-1\right)  \beta}%
\omega_{\gamma bk}.
\]
We now recall that if supp $\omega_{\gamma bk}\subset\mathbb{Z}_{p}^{n}$ then
$\gamma\leq0$ and $b\in p^{\gamma}\mathbb{Z}_{p}^{n}/\mathbb{Z}_{p}^{n}$. We
now take $-\gamma+1=m$, with $m\in\mathbb{N}\smallsetminus\left\{  0\right\}
$. Then $b\in p^{-m+1}\mathbb{Z}_{p}^{n}/\mathbb{Z}_{p}^{n}$ and $\lambda
_{m}=p^{m\beta}$, and the multiplicity of $\lambda_{m}$\ is equal to $\left(
p^{n}-1\right)  p^{n\left(  m-1\right)  }=p^{nm}\left(  1-p^{-n}\right)  $ for
$m\in\mathbb{N}\smallsetminus\left\{  0\right\}  $. Hence%
\[
\zeta(s;D_{T}^{\beta})=\sum\limits_{m=1}^{\infty}\frac{p^{nm}\left(
1-p^{-n}\right)  }{p^{m\beta s}}=\int\limits_{\mathbb{Q}_{p}^{n}%
\smallsetminus\mathbb{Z}_{p}^{n}}\frac{d^{n}\xi}{\left\Vert \xi\right\Vert
_{p}^{\beta s}}=\left(  1-p^{-n}\right)  \frac{p^{n-\beta s}}{1-p^{n-\beta s}%
},
\]
for $\operatorname{Re}(s)>\frac{n}{\beta}$. Then $\zeta(s;D_{T}^{\beta})$
admits a meromorphic continuation to the whole complex plane as a rational
function of $p^{-s}$ with poles \ in the set $\frac{n}{\beta}+\frac{2\pi
i\mathbb{Z}}{\beta\ln p}$.

\section{Heat traces and $p-$adic heat equations on the until ball}

From now on, $\left(  \boldsymbol{A}_{\beta},Dom\left(  \boldsymbol{A}_{\beta
}\right)  \right)  $ is given by
\begin{equation}
\left(  \boldsymbol{A}_{\beta}\varphi\right)  \left(  x\right)  =\mathcal{F}%
_{\xi\rightarrow x}^{-1}\left(  \boldsymbol{A}_{\beta}\left(  \xi\right)
\mathcal{F}_{x\rightarrow\xi}\varphi\right)  \text{ for }\varphi\in Dom\left(
\boldsymbol{A}_{\beta}\right)  =\mathcal{L}_{0}\left(  \mathbb{Z}_{p}%
^{n}\right)  . \label{Domain_of_A_beta}%
\end{equation}

\subsection{$p-$adic heat equations on the until ball}

We introduce the following function:
\[
K(x,t)=\int\limits_{\mathbb{Q}_{p}^{n}\setminus\mathbb{Z}_{p}^{n}}\chi
_{p}(-x\cdot\xi)e^{-tA_{\beta}(\xi)}d^{n}\xi,\,\,\text{for}\,\,t>0,\,\,\,x\in
\mathbb{Q}_{p}^{n}.
\]
We note that by (\ref{1w}) $e^{-tA_{\beta}(\xi)}\leq e^{-tC_{0}\left\Vert
\xi\right\Vert _{p}^{\beta}}\in L^{1}$ for $\,t>0$, which implies that
$K(x,t)$ is well-defined for$\,t>0$ and$\,\,x\in\mathbb{Q}_{p}^{n}$.

\begin{lemma}
\label{lemma2} With the above notation, the following formula holds:
\begin{multline*}
K(x,t)=\\
\left\{
\begin{array}
[c]{lll}%
\Omega\left(  \left\Vert x\right\Vert _{p}\right)  \left\{  (1-p^{-n}%
)\sum\limits_{j=1}^{ord(x)}e^{-tA_{\beta}(p^{j})}p^{nj}-p^{ord(x)n}%
e^{-tA_{\beta}(p^{ord(x)+1})}\right\}  & \text{if} & ord\left(  x\right)
\in\mathbb{N}\\
&  & \\
(1-p^{-n})\Omega\left(  \left\Vert x\right\Vert _{p}\right)  \sum
\limits_{j=1}^{\infty}e^{-tA_{\beta}(p^{j})}p^{nj} & \text{if} & ord\left(
x\right)  =+\infty
\end{array}
\right.
\end{multline*}
for any $t>0$.
\end{lemma}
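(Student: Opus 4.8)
The plan is to exploit the radiality of the symbol $A_\beta$ and to slice the domain of integration into spheres. Since $\mathbb{Q}_p^n\setminus\mathbb{Z}_p^n=\bigsqcup_{j=1}^{\infty}S_j^n$ with $S_j^n=\{\xi:\Vert\xi\Vert_p=p^{j}\}$, and since $A_\beta(\xi)=A_\beta(p^{j})$ is constant on each $S_j^n$ by property (i), the exponential factor pulls out of each sphere integral. The domination $e^{-tA_\beta(\xi)}\leq e^{-tC_0\Vert\xi\Vert_p^{\beta}}\in L^1$ noted just before the statement guarantees absolute integrability, so by countable additivity of the integral (Fubini--Tonelli) I may write
\[
K(x,t)=\sum_{j=1}^{\infty}e^{-tA_\beta(p^{j})}\int_{S_j^n}\chi_p(-x\cdot\xi)\,d^n\xi .
\]

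First I would evaluate each sphere integral by writing $S_j^n=B_j^n\setminus B_{j-1}^n$ and invoking the classical formula $\int_{B_r^n}\chi_p(x\cdot\xi)\,d^n\xi=p^{nr}\Omega(p^{r}\Vert x\Vert_p)$ (which holds equally with $-x$, since balls are symmetric under $\xi\mapsto-\xi$); this computation is standard and may be cited from \cite{A-K-S}, \cite{V-V-Z}. It gives
\[
\int_{S_j^n}\chi_p(-x\cdot\xi)\,d^n\xi=p^{nj}\Omega(p^{j}\Vert x\Vert_p)-p^{n(j-1)}\Omega(p^{j-1}\Vert x\Vert_p).
\]
Writing $m=ord(x)$, the factor $\Omega(p^{j}\Vert x\Vert_p)=\Omega(p^{j-m})$ equals $1$ precisely when $j\leq m$, while $\Omega(p^{j-1}\Vert x\Vert_p)=1$ precisely when $j\leq m+1$. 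Hence the $j$-th sphere contributes $p^{nj}(1-p^{-n})$ when $j\leq m$, contributes $-p^{nm}$ when $j=m+1$, and vanishes when $j\geq m+2$.

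The stated formula then falls out by assembling these contributions. If $x\notin\mathbb{Z}_p^n$, i.e. $m<0$, every $\Omega$-factor vanishes for all $j\geq 1$ and $K(x,t)=0$; this is exactly what the prefactor $\Omega(\Vert x\Vert_p)$ records. If $ord(x)\in\mathbb{N}$, summing the terms with $j\leq m$ and adding the single $j=m+1$ term yields $(1-p^{-n})\sum_{j=1}^{m}e^{-tA_\beta(p^{j})}p^{nj}-p^{nm}e^{-tA_\beta(p^{m+1})}$, the empty sum handling the case $m=0$ correctly. Finally, for $x=0$ one has $m=+\infty$, so every index $j$ lies in the first regime and there is no terminating term, producing the series $(1-p^{-n})\sum_{j=1}^{\infty}e^{-tA_\beta(p^{j})}p^{nj}$. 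The only point needing care is the convergence of this last series together with the legitimacy of the termwise integration; both follow from $A_\beta(p^{j})\geq C_0 p^{j\beta}$, which forces super-exponential decay of the summands and uniformly dominates the partial sums. I therefore expect no genuine obstacle beyond carefully bookkeeping the boundary index $j=m+1$ that separates the finite-order and infinite-order cases.
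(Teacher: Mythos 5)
Your proposal is correct and follows essentially the same route as the paper: slice $\mathbb{Q}_p^n\setminus\mathbb{Z}_p^n$ into spheres, pull out the radial factor $e^{-tA_\beta(p^{j})}$, and evaluate the character integral over each sphere to get $p^{nj}(1-p^{-n})$ for $j\leq ord(x)$, $-p^{n\,ord(x)}$ at $j=ord(x)+1$, and $0$ beyond, then assemble the cases. The only cosmetic difference is that the paper computes the sphere integral by rescaling to the unit sphere and quoting the standard unit-sphere character integral, while you write $S_j^n=B_j^n\setminus B_{j-1}^n$ and use the Fourier transform of a ball indicator --- the same textbook fact in a different guise.
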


\begin{proof}
Take $x=p^{ord(x)}x_{0}$, with$\,\Vert x_{0}\Vert_{p}=1$, then%
\begin{align*}
K(x,t)  &  =\sum\limits_{j=1}^{\infty}e^{-tA_{\beta}(p^{j})}\int
\limits_{\Vert\xi\Vert_{p}=p^{j}}\chi_{p}(-x\cdot\xi)d^{n}\xi\\
&  =\sum\limits_{j=1}^{\infty}e^{-tA_{\beta}(p^{j})}p^{nj}\int\limits_{\Vert
y\Vert_{p}=1}\chi_{p}(-p^{-j+ord(x)}x_{0}\cdot y)d^{n}y\\
&  =\sum\limits_{j=1}^{\infty}e^{-tA_{\beta}(p^{j})}p^{nj}\left\{
\begin{array}
[c]{ll}%
1-p^{-n} & j\leq ord(x)\\
& \\
-p^{-n} & j=ord(x)+1\\
& \\
0 & j\geq ord(x)+2.
\end{array}
\right.
\end{align*}
Then $K(x,t)=0$ for $\Vert x\Vert_{p}>1$ and $t>0$. Finally, we note that the
announced formula is valid if $x=0$.
\end{proof}

We identify $L_{0}^{2}(\mathbb{Z}_{p}^{n})$ with an isometric subspace of
$L^{2}(\mathbb{Q}_{p}^{n})$ by extending the functions of $L_{0}%
^{2}(\mathbb{Z}_{p}^{n})$ as zero outside of $\mathbb{Z}_{p}^{n}$. We define
$\left\{  T\left(  t\right)  \right\}  _{t\geq0}$ as the family of operators%
\[%
\begin{array}
[c]{ccc}%
L_{0}^{2}(\mathbb{Z}_{p}^{n}) & \rightarrow & L_{0}^{2}(\mathbb{Z}_{p}^{n})\\
&  & \\
f & \rightarrow & T\left(  t\right)  f
\end{array}
\]
with%
\[
\left(  T\left(  t\right)  f\right)  \left(  x\right)  =\left\{
\begin{array}
[c]{lll}%
f\left(  x\right)  & \text{if} & t=0\\
&  & \\
\left(  K(\cdot,t)\ast f\right)  \left(  x\right)  & \text{if} & t>0.
\end{array}
\right.
\]

\begin{lemma}
\label{Lemma_0}With the above notation the following assertions hold:

\noindent(i) operator $T\left(  t\right)  $, $t\geq0$, is a well-defined
bounded linear operator;

\noindent(ii) for $t\geq0$,%
\[
\left(  T(t)f\right)  (x)={\mathcal{F}}_{\xi\rightarrow x}^{-1}\left[
1_{\mathbb{Q}_{p}^{n}\setminus\mathbb{Z}_{p}^{n}}(\xi)e^{-tA_{\beta}(\xi
)}\widehat{f}(\xi)\right]  ,
\]
where\ $\widehat{f}(\xi)$ denotes the Fourier transform in $L^{2}%
(\mathbb{Q}_{p}^{n})$ of $f\in L_{0}^{2}(\mathbb{Z}_{p}^{n})$;

\noindent(iii) $T(t)$, for $t>0$, is a compact, self-adjoint and non-negative operator.
\end{lemma}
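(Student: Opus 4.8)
The proof splits naturally according to the three assertions, and I would handle them in the order (i), (ii), (iii), using (ii) as the workhorse for the rest.

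For assertion (i), the plan is to show $T(t)$ maps $L_0^2(\mathbb{Z}_p^n)$ into itself and is bounded. First I would argue well-definedness: since $f$ is supported in $\mathbb{Z}_p^n$ and $K(\cdot,t)$ is supported in $\mathbb{Z}_p^n$ (by Lemma \ref{lemma2}, $K(x,t)=0$ for $\|x\|_p>1$), the convolution $K(\cdot,t)\ast f$ makes sense. The key facts to verify are that $T(t)f$ again lands in $\mathbb{Z}_p^n$ and still has average zero. Support: for $x\notin\mathbb{Z}_p^n$, both factors in the convolution integral vanish on the relevant domain by ultrametric considerations. Average zero: $\int(K(\cdot,t)\ast f)\,d^nx=\big(\int K\,d^nx\big)\big(\int f\,d^nx\big)$, and since $\int f\,d^nx=0$ by the Lizorkin property (\ref{Lizorkin_property}), the product vanishes. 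Boundedness is cleanest via Young's inequality once I know $K(\cdot,t)\in L^1$; the estimate $e^{-tA_\beta(\xi)}\le e^{-tC_0\|\xi\|_p^\beta}\in L^1$ from (\ref{1w}) supplies integrability of the symbol and hence control of $K$.

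For assertion (ii), the plan is to compute the Fourier transform of $T(t)f$. The convolution theorem gives $\mathcal{F}(K(\cdot,t)\ast f)=\widehat{K}(\cdot,t)\,\widehat{f}$. From the defining integral $K(x,t)=\int_{\mathbb{Q}_p^n\setminus\mathbb{Z}_p^n}\chi_p(-x\cdot\xi)e^{-tA_\beta(\xi)}d^n\xi$, one recognizes $K(\cdot,t)$ as the inverse Fourier transform of $1_{\mathbb{Q}_p^n\setminus\mathbb{Z}_p^n}(\xi)e^{-tA_\beta(\xi)}$, so $\widehat{K}(\xi,t)=1_{\mathbb{Q}_p^n\setminus\mathbb{Z}_p^n}(\xi)e^{-tA_\beta(\xi)}$, yielding the claimed formula after applying $\mathcal{F}^{-1}$. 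The only delicate point is justifying that the $L^1$ computation of $\widehat{K}$ transfers correctly to the $L^2$ Fourier transform of $f$; I would note that $f\in L_0^2$ forces $\widehat{f}(0)=0$, so the cutoff $1_{\mathbb{Q}_p^n\setminus\mathbb{Z}_p^n}$ interacts harmlessly with the low-frequency behavior, and the multiplier $1_{\mathbb{Q}_p^n\setminus\mathbb{Z}_p^n}(\xi)e^{-tA_\beta(\xi)}$ is bounded, so the operator is well-defined on all of $L^2$ by the Plancherel/multiplier picture. This multiplier representation is what makes the remaining claims transparent.

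For assertion (iii), with (ii) in hand $T(t)$ is unitarily equivalent (via $\mathcal{F}$) to multiplication by $M_t(\xi):=1_{\mathbb{Q}_p^n\setminus\mathbb{Z}_p^n}(\xi)e^{-tA_\beta(\xi)}$. Since $M_t$ is real-valued, $T(t)$ is \emph{self-adjoint}; since $M_t\ge 0$, it is \emph{non-negative}. \emph{Compactness} is the main obstacle and the one place a genuine argument is needed. The plan is to use the eigenbasis: by Lemma \ref{eigenfunction} and Proposition \ref{Prop1}, $\{\omega_m\}$ is a complete orthonormal basis of $L_0^2(\mathbb{Z}_p^n)$ with $\boldsymbol{A}_\beta\omega_m=\lambda_m\omega_m$, and one checks directly (using that $\widehat{\omega_m}$ is supported on a single sphere outside $\mathbb{Z}_p^n$) that $T(t)\omega_m=e^{-t\lambda_m}\omega_m$. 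Thus $T(t)$ is diagonal with eigenvalues $e^{-t\lambda_m}$. Since $\lambda_m\uparrow+\infty$ (Proposition \ref{Prop1}), for $t>0$ we have $e^{-t\lambda_m}\to 0$, so $T(t)$ is a norm-limit of the finite-rank truncations $\sum_{m\le N}e^{-t\lambda_m}\langle\cdot,\omega_m\rangle\omega_m$, hence compact. The estimate $\lambda_m\ge C_0 p^{m\beta}$-type growth from (\ref{1w}) guarantees the decay is fast enough, but only $e^{-t\lambda_m}\to 0$ is actually required for compactness.
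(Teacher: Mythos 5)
Your proposal is correct, and for parts (i) and (ii) it follows essentially the same route as the paper: well-definedness and boundedness via Young's inequality with $K(\cdot,t)\in L^{1}$, the support property from Lemma \ref{lemma2}, average zero by Fubini, and the multiplier formula via the convolution theorem (the paper justifies the Fourier computation by noting that $f$ and $K(\cdot,t)\ast f$ lie in $L^{1}\cap L^{2}$). One small repair is needed in your treatment of (ii) at $t=0$: the observation $\widehat{f}(0)=0$ alone does not make the cutoff harmless, since $1_{\mathbb{Z}_{p}^{n}}(\xi)\widehat{f}(\xi)$ need not vanish just because $\widehat{f}$ vanishes at the origin. What you need is that $\operatorname{supp}f\subseteq\mathbb{Z}_{p}^{n}$ forces $\widehat{f}(\xi+\xi_{0})=\widehat{f}(\xi)$ for all $\xi_{0}\in\mathbb{Z}_{p}^{n}$, hence $\widehat{f}\equiv\widehat{f}(0)=0$ on $\mathbb{Z}_{p}^{n}$; this is exactly how the paper argues in the proof of Lemma \ref{Lemma_2}, while in Lemma \ref{Lemma_0} itself it instead verifies $T(0)f=f$ by computing $\mathcal{F}^{-1}\left(\Omega\left(\Vert\xi\Vert_{p}\right)\widehat{f}(\xi)\right)=\Omega\left(\Vert x\Vert_{p}\right)\int_{\mathbb{Z}_{p}^{n}}f\,d^{n}x=0$ directly.

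Where you genuinely diverge is (iii), and there your argument is stronger than the paper's. The paper obtains self-adjointness from $\langle T(t)f,g\rangle=\langle f,T(t)g\rangle$ and then asserts that ``the compactness follows from the continuity of operator $T(t)$,'' which, read literally, is not a proof: bounded operators on an infinite-dimensional Hilbert space need not be compact. Your route --- diagonalize $T(t)$ on the basis $\{\omega_{m}\}$ of Proposition \ref{Prop1}, check $T(t)\omega_{m}=e^{-t\lambda_{m}}\omega_{m}$ because $\widehat{\omega_{m}}$ is supported on a single sphere in $\mathbb{Q}_{p}^{n}\setminus\mathbb{Z}_{p}^{n}$ on which $A_{\beta}$ is constant, and conclude compactness from $e^{-t\lambda_{m}}\rightarrow0$ by exhibiting $T(t)$ as a norm limit of finite-rank truncations --- is complete, and it is legitimately available at this point of the paper since Lemma \ref{eigenfunction}, Remark \ref{lambda} and Proposition \ref{Prop1} all precede Lemma \ref{Lemma_0}, so there is no circularity. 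It also anticipates the identity $T(t)\omega_{m}=e^{-\lambda_{m}t}\omega_{m}$, which the paper only establishes later, in Proposition \ref{Theorem2}, via the Hilbert--Schmidt theorem applied to $T(1)$ together with uniqueness for the Cauchy problem of Theorem \ref{Theorem1}. In addition, you explicitly prove non-negativity (the multiplier $M_{t}\geq0$), a claim that appears in the statement of the lemma but is not addressed in the paper's proof. For completeness, note an even shorter alternative for compactness: by Lemma \ref{lemma2} the kernel $K(x-y,t)$ is bounded on the compact set $\mathbb{Z}_{p}^{n}\times\mathbb{Z}_{p}^{n}$, so $T(t)$ is Hilbert--Schmidt and hence compact.
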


\begin{proof}
(i) We recall that $K\left(  \cdot,t\right)  \in L^{1}\left(  \mathbb{Q}%
_{p}^{n}\right)  $ for $t>0$, then, if $f\in L_{0}^{2}(\mathbb{Z}_{p}%
^{n})\subset L^{2}(\mathbb{Q}_{p}^{n})$, by the Young inequality,%
\[
u\left(  x,t\right)  :=\left(  K(\cdot,t)\ast f\right)  \left(  x\right)  \in
L^{2}(\mathbb{Q}_{p}^{n})\text{, for }t>0.
\]
Now, by Lemma \ref{lemma2}, supp $u\left(  x,t\right)  \subset\mathbb{Z}%
_{p}^{n}$ for $t>0$, i.e. $u\left(  x,t\right)  \in L^{2}(\mathbb{Z}_{p}^{n}%
)$, for $t>0$. Again by the Young inequality,
\begin{multline*}
\left\Vert u\left(  x,t\right)  \right\Vert _{L_{0}^{2}(\mathbb{Z}_{p}^{n}%
)}=\left\Vert u\left(  x,t\right)  \right\Vert _{L^{2}(\mathbb{Q}_{p}^{n}%
)}\leq\left\Vert K\left(  x,t\right)  \right\Vert _{L^{1}(\mathbb{Q}_{p}^{n}%
)}\left\Vert f\left(  x\right)  \right\Vert _{L^{2}(\mathbb{Q}_{p}^{n})}\\
=C(t)\left\Vert f\left(  x\right)  \right\Vert _{L_{0}^{2}(\mathbb{Z}_{p}%
^{n})}\text{, for }t>0.
\end{multline*}
We finally show that
\[
\int\nolimits_{\mathbb{Z}_{p}^{n}}u\left(  x,t\right)  d^{n}x=0\text{, for
}t>0.
\]
Indeed, for $t>0$, by using Fubini's Theorem,%
\begin{align*}
\int\nolimits_{\mathbb{Z}_{p}^{n}}u\left(  x,t\right)  d^{n}x  &
=\int\nolimits_{\mathbb{Z}_{p}^{n}}\left\{  \int\nolimits_{\mathbb{Z}_{p}^{n}%
}K(y,t)f(x-y)d^{n}y\right\}  d^{n}x\\
&  =\int\nolimits_{\mathbb{Z}_{p}^{n}}K(y,t)\left\{  \int\nolimits_{\mathbb{Z}%
_{p}^{n}}f(x-y)d^{n}x\right\}  d^{n}y\text{ \ \ (taking }z_{1}=x-y\text{,
}z_{2}=y\text{)}\\
&  =\int\nolimits_{\mathbb{Z}_{p}^{n}}K(z_{2},t)\left\{  \int
\nolimits_{\mathbb{Z}_{p}^{n}}f(z_{1})d^{n}z_{1}\right\}  d^{n}z_{2}=0.
\end{align*}

(ii) Since $f\left(  x\right)  $, $u\left(  x,t\right)  \in L^{1}%
(\mathbb{Z}_{p}^{n})\cap L^{2}(\mathbb{Z}_{p}^{n})$ for $t>0$, because
$L^{2}(\mathbb{Z}_{p}^{n})\subset L^{1}(\mathbb{Z}_{p}^{n})$, we have%
\[
\mathcal{F}_{x\rightarrow\xi}(u\left(  x,t\right)  )=1_{\mathbb{Q}_{p}%
^{n}\setminus\mathbb{Z}_{p}^{n}}(\xi)e^{-tA_{\beta}(\xi)}\widehat{f}(\xi),
\]
this last function belongs to $L^{1}(\mathbb{Q}_{p}^{n})$, indeed, by the
Cauchy-Schwarz inequality,
\begin{gather*}
\left\Vert 1_{\mathbb{Q}_{p}^{n}\setminus\mathbb{Z}_{p}^{n}}(\xi
)e^{-tA_{\beta}(\xi)}\widehat{f}(\xi)\right\Vert _{L^{1}(\mathbb{Q}_{p}^{n}%
)}\leq\left\Vert 1_{\mathbb{Q}_{p}^{n}\setminus\mathbb{Z}_{p}^{n}}%
(\xi)e^{-tA_{\beta}(\xi)}\right\Vert _{L^{2}(\mathbb{Q}_{p}^{n})}\left\Vert
\widehat{f}(\xi)\right\Vert _{L^{2}(\mathbb{Q}_{p}^{n})}\\
\leq\left\Vert e^{-tA_{\beta}(\xi)}\right\Vert _{L^{2}(\mathbb{Q}_{p}^{n}%
)}\left\Vert f(\xi)\right\Vert _{L^{2}(\mathbb{Q}_{p}^{n})}=\left\Vert
e^{-tA_{\beta}(\xi)}\right\Vert _{L^{2}(\mathbb{Q}_{p}^{n})}\left\Vert
f(\xi)\right\Vert _{L_{0}^{2}(\mathbb{Z}_{p}^{n})}<\infty
\end{gather*}
because $\int_{\mathbb{Q}_{p}^{n}}e^{-2tA_{\beta}(\xi)}d^{n}\xi\leq
\int_{\mathbb{Q}_{p}^{n}}e^{-2C_{0}t\left\Vert \xi\right\Vert _{p}^{\beta}%
}d^{n}\xi<\infty$, cf. (\ref{1w}). Finally,
\begin{align*}
\left(  T\left(  0\right)  f\right)  \left(  x\right)   &  =\int
_{\mathbb{Q}_{p}^{n}\setminus\mathbb{Z}_{p}^{n}}\chi_{p}\left(  -\xi\cdot
x\right)  \widehat{f}(\xi)d^{n}\xi\\
&  =\int_{\mathbb{Q}_{p}^{n}}\chi_{p}\left(  -\xi\cdot x\right)  \widehat
{f}(\xi)d^{n}\xi-\int_{\mathbb{Z}_{p}^{n}}\chi_{p}\left(  -\xi\cdot x\right)
\widehat{f}(\xi)d^{n}\xi\\
&  =f\left(  x\right)  -\mathcal{F}_{x\rightarrow\xi}^{-1}\left(
\Omega\left(  \left\Vert \xi\right\Vert _{p}\right)  \widehat{f}(\xi)\right)
=f\left(  x\right)  -\Omega\left(  \left\Vert x\right\Vert _{p}\right)  \ast
f(x)\\
&  =f\left(  x\right)  -\Omega\left(  \left\Vert x\right\Vert _{p}\right)
\int_{\mathbb{Z}_{p}^{n}}f(x)d^{n}x=f\left(  x\right)  .
\end{align*}

(iii) Since $T(t)$, for $t>0$, is bounded and $\left\langle
T(t)f,g\right\rangle =\left\langle f,\,T(t)g\right\rangle ,$ for $f,g\in
L^{2}(\mathbb{Z}_{p}^{n}),$ where $\left\langle \cdot,\cdot\right\rangle $
denotes the inner product of $L^{2}(\mathbb{Q}_{p}^{n})$,$\,T(t)$ is
self-adjoint for $t>0$. The compactness follows from the continuity of
operator $T(t)$.
\end{proof}

\begin{lemma}
\label{Lemma_1}The one-parameter family $\left\{  T(t)\right\}  _{t\geq0}$ of
bounded linear operators from $L_{0}^{2}\left(  \mathbb{Z}_{p}^{n}\right)  $
into itself is a contraction semigroup.
\end{lemma}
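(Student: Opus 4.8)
The plan is to verify the two defining properties of a contraction $C_0$-semigroup: the semigroup law $T(t+s)=T(t)T(s)$ for $s,t\geq 0$ together with $T(0)=\mathrm{Id}$, and the contraction bound $\|T(t)f\|_{L_0^2(\mathbb{Z}_p^n)}\leq \|f\|_{L_0^2(\mathbb{Z}_p^n)}$. The cleanest route is to work entirely on the Fourier side, using the representation established in Lemma \ref{Lemma_0}(ii), namely
\[
\mathcal{F}_{x\rightarrow\xi}\left(T(t)f\right)(\xi)=1_{\mathbb{Q}_p^n\setminus\mathbb{Z}_p^n}(\xi)\,e^{-tA_\beta(\xi)}\,\widehat{f}(\xi),\qquad t\geq 0,
\]
which converts each operator into multiplication by the bounded multiplier $m_t(\xi):=1_{\mathbb{Q}_p^n\setminus\mathbb{Z}_p^n}(\xi)\,e^{-tA_\beta(\xi)}$.

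First I would establish the semigroup property. On the Fourier side the composition $T(t)T(s)$ corresponds to multiplication by $m_t(\xi)m_s(\xi)$. Since $1_{\mathbb{Q}_p^n\setminus\mathbb{Z}_p^n}(\xi)$ is an idempotent, $1_{\mathbb{Q}_p^n\setminus\mathbb{Z}_p^n}^2=1_{\mathbb{Q}_p^n\setminus\mathbb{Z}_p^n}$, and the exponentials combine additively, $e^{-tA_\beta(\xi)}e^{-sA_\beta(\xi)}=e^{-(t+s)A_\beta(\xi)}$, we obtain $m_t(\xi)m_s(\xi)=m_{t+s}(\xi)$, hence $T(t)T(s)=T(t+s)$ for $t,s>0$. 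The boundary cases involving $t=0$ are handled directly: $T(0)=\mathrm{Id}$ was already verified in the final computation of the proof of Lemma \ref{Lemma_0}(ii), and $T(t)T(0)=T(t)=T(0)T(t)$ then follows trivially. One should note that the Fourier-multiplier identity holds as an identity of operators on $L_0^2(\mathbb{Z}_p^n)$ because for $f$ in this space $\widehat{f}$ already vanishes on a neighbourhood of the origin after extension, so multiplication by $1_{\mathbb{Q}_p^n\setminus\mathbb{Z}_p^n}$ does not lose information beyond what is intrinsic to the space.

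Next I would prove the contraction bound. By Plancherel's theorem on $L^2(\mathbb{Q}_p^n)$ together with the isometric identification of $L_0^2(\mathbb{Z}_p^n)$ as a subspace (as set up just before the definition of $T(t)$), we have
\[
\|T(t)f\|_{L_0^2(\mathbb{Z}_p^n)}^2=\int_{\mathbb{Q}_p^n}\left|m_t(\xi)\right|^2\,|\widehat{f}(\xi)|^2\,d^n\xi\leq\int_{\mathbb{Q}_p^n}|\widehat{f}(\xi)|^2\,d^n\xi=\|f\|_{L_0^2(\mathbb{Z}_p^n)}^2,
\]
where the inequality uses $|m_t(\xi)|\leq 1$, which holds because $A_\beta(\xi)\geq 0$ forces $e^{-tA_\beta(\xi)}\leq 1$ and the characteristic function contributes a factor in $\{0,1\}$. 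This gives the contraction property for each fixed $t\geq 0$.

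The main obstacle, and the only genuinely delicate point, is the strong continuity $t\mapsto T(t)f$ at $t=0$, i.e. showing $\|T(t)f-f\|\to 0$ as $t\to 0^+$ for every $f\in L_0^2(\mathbb{Z}_p^n)$; a $C_0$-semigroup requires this, yet it is not immediate because $T(0)=\mathrm{Id}$ differs from $\lim_{t\to 0^+}T(t)$ in general, the latter being multiplication by $1_{\mathbb{Q}_p^n\setminus\mathbb{Z}_p^n}$. The resolution is exactly that for $f\in L_0^2(\mathbb{Z}_p^n)$ one has $\widehat{f}(\xi)=\widehat{f}(\xi)1_{\mathbb{Q}_p^n\setminus\mathbb{Z}_p^n}(\xi)$ up to the Lizorkin average-zero condition \eqref{Lizorkin_property}, so that the difference $T(t)f-f$ equals, on the Fourier side, $(e^{-tA_\beta(\xi)}-1)1_{\mathbb{Q}_p^n\setminus\mathbb{Z}_p^n}(\xi)\widehat{f}(\xi)$ plus a term supported on $\mathbb{Z}_p^n$ that vanishes by the average-zero property. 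I would then invoke the dominated convergence theorem: the integrand $|e^{-tA_\beta(\xi)}-1|^2\,|\widehat{f}(\xi)|^2$ tends pointwise to $0$ as $t\to 0^+$ and is dominated by the integrable majorant $|\widehat{f}(\xi)|^2$, yielding $\|T(t)f-f\|^2\to 0$. Together with the contraction bound and the semigroup law, this establishes that $\{T(t)\}_{t\geq 0}$ is a contraction semigroup.
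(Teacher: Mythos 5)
Your proof is correct and takes essentially the same route as the paper's: both work entirely on the Fourier side via the multiplier representation of Lemma \ref{Lemma_0}(ii), deriving the semigroup law from $m_t(\xi)m_s(\xi)=m_{t+s}(\xi)$, the contraction bound from Plancherel and $\vert m_t(\xi)\vert\leq 1$, and strong continuity by dominated convergence with the integrable majorant (a constant multiple of) $\vert\widehat{f}(\xi)\vert^{2}$. The only notable difference is in the continuity step: the paper first reduces by density to $f\in\mathcal{L}_{0}(\mathbb{Z}_{p}^{n})$ and proves continuity at every $t_{0}\geq 0$, whereas you check continuity only at $t=0$ directly for arbitrary $f\in L_{0}^{2}(\mathbb{Z}_{p}^{n})$, using that $\widehat{f}$ vanishes on $\mathbb{Z}_{p}^{n}$ (valid, since $\widehat{f}$ is invariant under translations by $\mathbb{Z}_{p}^{n}$ when $\mathrm{supp}\,f\subseteq\mathbb{Z}_{p}^{n}$ and $\widehat{f}(0)=0$) --- which suffices for the $C_{0}$ property and shows the paper's density reduction is not actually needed.
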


\begin{proof}
The lemma follows from the following claims.

\textbf{Claim 1.} $\left\Vert T(t)\right\Vert _{L_{0}^{2}\left(
\mathbb{Z}_{p}^{n}\right)  }\leq1$ for $t\geq0$. In addition, $\left\Vert
T(t)\right\Vert _{L_{0}^{2}\left(  \mathbb{Z}_{p}^{n}\right)  }<1$ for $t>0$.

Consider $t>0$, by Lemma \ref{Lemma_0}\ and (\ref{1w}),
\begin{gather*}
\Vert T(t)f\Vert_{L_{0}^{2}\left(  \mathbb{Z}_{p}^{n}\right)  }^{2}=\Vert
T(t)f\Vert_{L^{2}(\mathbb{Q}_{p}^{n})}^{2}=\Vert\widehat{T(t)f(\xi)}%
\Vert_{L^{2}(\mathbb{Q}_{p}^{n})}^{2}=\int\limits_{\mathbb{Q}_{p}^{n}%
\setminus\mathbb{Z}_{p}^{n}}e^{-2tA_{\beta}(\xi)}|\widehat{f}(\xi)|^{2}%
d^{n}\xi\leq\\
\int\limits_{\mathbb{Q}_{p}^{n}\setminus\mathbb{Z}_{p}^{n}}e^{-2C_{0}t\Vert
\xi\Vert_{p}^{\beta}}|\widehat{f}(\xi)|^{2}d^{n}\xi\leq\sup\limits_{\xi
\in\mathbb{Q}_{p}^{n}\setminus\mathbb{Z}_{p}^{n}}e^{-2C_{0}t\Vert\xi\Vert
_{p}^{\beta}}\int\limits_{\mathbb{Q}_{p}^{n}\setminus\mathbb{Z}_{p}^{n}%
}|\widehat{f}(\xi)|^{2}d^{n}\xi\\
<\int\limits_{\mathbb{Q}_{p}^{n}\setminus\mathbb{Z}_{p}^{n}}|\widehat{f}%
(\xi)|^{2}d^{n}\xi\leq\Vert f\Vert_{L^{2}(\mathbb{Q}_{p}^{n})}^{2}=\Vert
f\Vert_{L_{0}^{2}\left(  \mathbb{Z}_{p}^{n}\right)  }^{2},
\end{gather*}
where we used that $\underset{\xi\in\mathbb{Q}_{p}^{n}\setminus\mathbb{Z}%
_{p}^{n}}{\sup}e^{-2C_{0}t\Vert\xi\Vert_{p}^{\beta}}<1$.

\textbf{Claim 2. \ }$T(0)=I$.

\textbf{Claim 3. \ }$T(t+s)=T(t)T(s)$ for $t$, $s\geq0$.

This claim follows from Lemma \ref{Lemma_0}-(ii).

\textbf{Claim 4. }For $f\in L_{0}^{2}\left(  \mathbb{Z}_{p}^{n}\right)  $, the
function $t\rightarrow T\left(  t\right)  f$ belongs to $C\left(  \left[
0,\infty\right)  ,L_{0}^{2}\left(  \mathbb{Z}_{p}^{n}\right)  \right)  $.

Notice that since $\mathcal{L}_{0}^{2}\left(  \mathbb{Z}_{p}^{n}\right)  $ is
dense in $L_{0}^{2}\left(  \mathbb{Z}_{p}^{n}\right)  $ for $\left\Vert
\cdot\right\Vert _{L^{2}}$ norm, it is sufficient to show Claim 4 for
$f\in\mathcal{L}_{0}^{2}\left(  \mathbb{Z}_{p}^{n}\right)  $. Indeed,%
\begin{multline*}
\lim_{t\rightarrow t_{0}}\left\Vert T\left(  t\right)  f-T\left(
t_{0}\right)  f\right\Vert _{L_{0}^{2}\left(  \mathbb{Z}_{p}^{n}\right)  }%
^{2}=\lim_{t\rightarrow t_{0}}\left\Vert T\left(  t\right)  f-T\left(
t_{0}\right)  f\right\Vert _{L^{2}\left(  \mathbb{Q}_{p}^{n}\right)  }^{2}\\
=\lim_{t\rightarrow t_{0}}\left\Vert \widehat{T\left(  t\right)  f}%
-\widehat{T\left(  t_{0}\right)  f}\right\Vert _{L^{2}\left(  \mathbb{Q}%
_{p}^{n}\right)  }^{2}=\lim_{t\rightarrow t_{0}}\int\nolimits_{\mathbb{Q}%
_{p}^{n}\setminus\mathbb{Z}_{p}^{n}}\left\vert \widehat{f}(\xi)\right\vert
^{2}\left\vert e^{-tA_{\beta}(\xi)}-e^{-t_{0}A_{\beta}(\xi)}\right\vert
^{2}d^{n}\xi,
\end{multline*}
now, since $1_{\mathbb{Q}_{p}^{n}\setminus\mathbb{Z}_{p}^{n}}\left(
\xi\right)  \left\vert \widehat{f}(\xi)\right\vert ^{2}\left\vert
e^{-tA_{\beta}(\xi)}-e^{-t_{0}A_{\beta}(\xi)}\right\vert ^{2}\leq4\left\vert
\widehat{f}(\xi)\right\vert ^{2}$, which is an integrable function, by
applying the Dominated Convergence Theorem, we have $\lim_{t\rightarrow t_{0}%
}\left\Vert T\left(  t\right)  f-T\left(  t_{0}\right)  f\right\Vert
_{L_{0}^{2}\left(  \mathbb{Z}_{p}^{n}\right)  }^{2}=0$.\ 
\end{proof}

\begin{lemma}
\label{Lemma_2}The infinitesimal generator of \ semigroup $\left\{
T(t)\right\}  _{t\geq0}$ restricted to $\mathcal{L}_{0}\left(  \mathbb{Z}%
_{p}^{n}\right)  $\ agrees with $\left(  -\boldsymbol{A}_{\beta}%
,\mathcal{L}_{0}\left(  \mathbb{Z}_{p}^{n}\right)  \right)  $.
\end{lemma}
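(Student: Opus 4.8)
The plan is to show that the infinitesimal generator $B$ of the semigroup $\{T(t)\}_{t\geq 0}$, when restricted to the dense subspace $\mathcal{L}_{0}(\mathbb{Z}_{p}^{n})$, coincides with $-\boldsymbol{A}_{\beta}$. By definition, $B f = \lim_{t\to 0^{+}} \frac{T(t)f - f}{t}$ in the $L_{0}^{2}(\mathbb{Z}_{p}^{n})$-norm, whenever this limit exists. The natural approach is to move everything to the Fourier side, where $T(t)$ acts by multiplication, and then to verify that for $f \in \mathcal{L}_{0}(\mathbb{Z}_{p}^{n})$ the difference quotient converges to the expected limit. I would take $f \in \mathcal{L}_{0}(\mathbb{Z}_{p}^{n})$ arbitrary; by Lemma \ref{Lemma_0}(ii), $\widehat{T(t)f}(\xi) = 1_{\mathbb{Q}_{p}^{n}\setminus\mathbb{Z}_{p}^{n}}(\xi)\, e^{-tA_{\beta}(\xi)}\,\widehat{f}(\xi)$, so by Plancherel the computation reduces to controlling the $L^{2}$-limit of the multiplier $\frac{e^{-tA_{\beta}(\xi)}-1}{t}\,\widehat{f}(\xi)$ against the target $-A_{\beta}(\xi)\widehat{f}(\xi)$ on $\mathbb{Q}_{p}^{n}\setminus\mathbb{Z}_{p}^{n}$.

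**Reduction to an $L^2$-limit via Plancherel.**
Concretely, I would write
\[
\left\Vert \frac{T(t)f-f}{t} + \boldsymbol{A}_{\beta}f \right\Vert_{L_{0}^{2}(\mathbb{Z}_{p}^{n})}^{2}
= \int_{\mathbb{Q}_{p}^{n}\setminus\mathbb{Z}_{p}^{n}} \left\vert \frac{e^{-tA_{\beta}(\xi)}-1}{t} + A_{\beta}(\xi)\right\vert^{2}\,\vert\widehat{f}(\xi)\vert^{2}\,d^{n}\xi,
\]
using that $\widehat{\boldsymbol{A}_{\beta}f}(\xi) = A_{\beta}(\xi)\widehat{f}(\xi)$ and that the Fourier transform of $f$ is supported off $\mathbb{Z}_{p}^{n}$ in the relevant sense (since $\widehat{f}(0)=0$ and $f$ is supported in $\mathbb{Z}_{p}^{n}$, so $\widehat{f}$ is constant on cosets of $\mathbb{Z}_{p}^{n}$, matching the multiplier $1_{\mathbb{Q}_{p}^{n}\setminus\mathbb{Z}_{p}^{n}}$). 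The pointwise integrand tends to $0$ as $t\to 0^{+}$ for each fixed $\xi$, by the Taylor expansion of the exponential. The key point is then to produce a dominating function so that the Dominated Convergence Theorem applies and the integral vanishes in the limit.

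**The main obstacle: the dominating bound.**
The hard part will be finding an integrable dominating function for the integrand, uniformly in $t$ on a neighborhood of $0$. For fixed $\xi$ the elementary inequality $\vert e^{-u}-1+u\vert \leq \tfrac{u^{2}}{2}$ for $u\geq 0$ gives $\left\vert \frac{e^{-tA_{\beta}(\xi)}-1}{t}+A_{\beta}(\xi)\right\vert \leq \tfrac{t}{2}A_{\beta}(\xi)^{2}$, which is excellent for large $\xi$ but only useful when $t$ is bounded; combined with $A_{\beta}(\xi)\leq C_{1}\Vert\xi\Vert_{p}^{\beta}$, this bounds the integrand by $\tfrac{t^{2}}{4}C_{1}^{2}\Vert\xi\Vert_{p}^{2\beta}\vert\widehat{f}(\xi)\vert^{2}$. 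Here I would exploit crucially that $f \in \mathcal{L}_{0}(\mathbb{Z}_{p}^{n})$ is a \emph{test function}: its Fourier transform $\widehat{f}$ has compact support, so $\Vert\xi\Vert_{p}^{2\beta}\vert\widehat{f}(\xi)\vert^{2}$ is a bounded, compactly supported, hence integrable, function on $\mathbb{Q}_{p}^{n}\setminus\mathbb{Z}_{p}^{n}$. Thus the whole integrand is dominated by $t^{2}$ times a fixed integrable function, and the integral is $O(t^{2})$, so it tends to $0$. This simultaneously shows the limit exists and equals $-\boldsymbol{A}_{\beta}f$, proving that $B$ restricted to $\mathcal{L}_{0}(\mathbb{Z}_{p}^{n})$ agrees with $(-\boldsymbol{A}_{\beta}, \mathcal{L}_{0}(\mathbb{Z}_{p}^{n}))$. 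The compact support of $\widehat{f}$ for test functions is exactly what rescues the argument from the unbounded growth of the symbol $A_{\beta}$, and is the reason the statement is phrased for the restriction to $\mathcal{L}_{0}(\mathbb{Z}_{p}^{n})$ rather than the full $L_{0}^{2}$-domain.
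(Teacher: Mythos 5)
Your proposal is correct and takes essentially the same route as the paper's proof: pass to the Fourier side via Lemma \ref{Lemma_0}(ii), use that $\widehat{f}$ vanishes on $\mathbb{Z}_{p}^{n}$ (it is constant on cosets of $\mathbb{Z}_{p}^{n}$ and $\widehat{f}(0)=0$) to dispose of the indicator $1_{\mathbb{Q}_{p}^{n}\setminus\mathbb{Z}_{p}^{n}}$, and exploit the compact support of $\widehat{f}$ to control the unbounded symbol. The only difference is cosmetic: where the paper applies the mean value theorem and dominated convergence, you use the second-order bound $\vert e^{-u}-1+u\vert\leq u^{2}/2$ to obtain an explicit $O(t^{2})$ rate, with the harmless slip that the squared integrand bound should read $\tfrac{t^{2}}{4}C_{1}^{4}\Vert\xi\Vert_{p}^{4\beta}\vert\widehat{f}(\xi)\vert^{2}$.
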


\begin{proof}
We show that
\[
\lim_{t\rightarrow0^{+}}\left\Vert \frac{T\left(  t\right)  f-f}%
{t}+\boldsymbol{A}_{\beta}f\right\Vert _{L_{0}^{2}\left(  \mathbb{Z}_{p}%
^{n}\right)  }=0,\text{ for }f\in\mathcal{L}_{0}\left(  \mathbb{Z}_{p}%
^{n}\right)  \text{.}%
\]
Indeed, \ by Lemma \ref{Lemma_0}-(ii),%
\begin{gather*}
\left\Vert \frac{T\left(  t\right)  f-f}{t}+\boldsymbol{A}_{\beta}f\right\Vert
_{L_{0}^{2}\left(  \mathbb{Z}_{p}^{n}\right)  }=\left\Vert \frac{T\left(
t\right)  f-f}{t}+\boldsymbol{A}_{\beta}f\right\Vert _{L^{2}\left(
\mathbb{Q}_{p}^{n}\right)  }\\
=\left\Vert \frac{\widehat{T\left(  t\right)  f}-\widehat{f}}{t}%
+\widehat{\boldsymbol{A}_{\beta}f}\right\Vert _{L^{2}\left(  \mathbb{Q}%
_{p}^{n}\right)  }=\left\Vert \left\{  \frac{1_{\mathbb{Q}_{p}^{n}%
\setminus\mathbb{Z}_{p}^{n}}(\xi)e^{-tA_{\beta}(\xi)}-1}{t}+A_{\beta}\left(
\xi\right)  \right\}  \widehat{f}\left(  \xi\right)  \right\Vert
_{L^{2}\left(  \mathbb{Q}_{p}^{n}\right)  }.
\end{gather*}
Now we note that
\[
\left\{  1_{\mathbb{Q}_{p}^{n}\setminus\mathbb{Z}_{p}^{n}}(\xi)e^{-tA_{\beta
}(\xi)}-1\right\}  \widehat{f}\left(  \xi\right)  =\widehat{f}\left(
\xi\right)  \left\{  e^{-tA_{\beta}(\xi)}-1\right\}  -1_{\mathbb{Z}_{p}^{n}%
}(\xi)e^{-tA_{\beta}(\xi)}\widehat{f}\left(  \xi\right)  ,
\]
and since supp $f\subset\mathbb{Z}_{p}^{n}$ then $\widehat{f}\left(  \xi
+\xi_{0}\right)  =\widehat{f}\left(  \xi\right)  $ for any $\xi_{0}%
\in\mathbb{Z}_{p}^{n}$, this fact implies that $1_{\mathbb{Z}_{p}^{n}}%
(\xi)e^{-tA_{\beta}(\xi)}\widehat{f}\left(  \xi\right)  =e^{-tA_{\beta}(\xi
)}\widehat{f}\left(  0\right)  =0$ because $f\in\mathcal{L}_{0}\left(
\mathbb{Z}_{p}^{n}\right)  $. \ Hence%
\begin{gather*}
\left\Vert \left\{  \frac{1_{\mathbb{Q}_{p}^{n}\setminus\mathbb{Z}_{p}^{n}%
}(\xi)e^{-tA_{\beta}(\xi)}-1}{t}+A_{\beta}\left(  \xi\right)  \right\}
\widehat{f}\left(  \xi\right)  \right\Vert _{L^{2}\left(  \mathbb{Q}_{p}%
^{n}\right)  }\\
=\left\Vert \frac{\left\{  e^{-tA_{\beta}(\xi)}-1\right\}  \widehat{f}\left(
\xi\right)  }{t}+A_{\beta}\left(  \xi\right)  \widehat{f}\left(  \xi\right)
\right\Vert _{L^{2}\left(  \mathbb{Q}_{p}^{n}\right)  }\\
=\left\Vert A_{\beta}(\xi)\widehat{f}\left(  \xi\right)  \left\{  1-e^{-\tau
A_{\beta}(\xi)}\right\}  \right\Vert _{L^{2}\left(  \mathbb{Q}_{p}^{n}\right)
}\text{ (for some }\tau\in\left(  0,t\right)  \text{).}%
\end{gather*}
Therefore, by the fact that $A_{\beta}(\xi)\widehat{f}\left(  \xi\right)
\in\mathcal{L}_{0}\left(  \mathbb{Q}_{p}^{n}\right)  $ and the Dominated
Convergence Theorem,
\[
\lim_{t\rightarrow0^{+}}\left\Vert \frac{T\left(  t\right)  f-f}%
{t}+\boldsymbol{A}_{\beta}f\right\Vert _{L_{0}^{2}\left(  \mathbb{Z}_{p}%
^{n}\right)  }=\lim_{t\rightarrow0^{+}}\left\Vert A_{\beta}(\xi)\widehat
{f}\left(  \xi\right)  \left\{  1-e^{-\tau A_{\beta}(\xi)}\right\}
\right\Vert _{L^{2}\left(  \mathbb{Q}_{p}^{n}\right)  }=0.
\]

\end{proof}

\begin{theorem}
\label{Theorem1}The initial value problem:%
\begin{equation}
\left\{
\begin{array}
[c]{l}%
u\left(  x,t\right)  \in C\left(  \left[  0,\infty\right)  ,Dom\left(
\boldsymbol{A}_{\beta}\right)  \right)  \cap C^{1}\left(  \left[
0,\infty\right)  ,L_{0}^{2}\left(  \mathbb{Z}_{p}^{n}\right)  \right) \\
\\
\frac{\partial u\left(  x,t\right)  }{\partial t}+\boldsymbol{A}_{\beta
}u\left(  x,t\right)  =0,\text{ }x\in\mathbb{Q}_{p}^{n}\text{, }t\in\left[
0,\infty\right) \\
\\
u\left(  x,0\right)  =\varphi\left(  x\right)  \in Dom\left(  \boldsymbol{A}%
_{\beta}\right)  ,
\end{array}
\right.  \label{Cauchy}%
\end{equation}
where $\left(  \boldsymbol{A}_{\beta},Dom\left(  \boldsymbol{A}_{\beta
}\right)  \right)  $ is given by (\ref{Domain_of_A_beta}) has a unique
solution given by $u\left(  x,t\right)  =T(t)\varphi\left(  x\right)  $.
\end{theorem}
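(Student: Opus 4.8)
The plan is to recognize (\ref{Cauchy}) as an abstract Cauchy problem and to solve it via the standard theory of strongly continuous semigroups. By Lemma \ref{Lemma_1}, $\left\{ T(t)\right\} _{t\geq0}$ is a contraction $C_{0}$-semigroup on the Hilbert space $L_{0}^{2}(\mathbb{Z}_{p}^{n})$, and by Lemma \ref{Lemma_2} its infinitesimal generator is $\left( -\boldsymbol{A}_{\beta},\mathcal{L}_{0}(\mathbb{Z}_{p}^{n})\right) $, which, in view of (\ref{Domain_of_A_beta}), is precisely the operator appearing in (\ref{Cauchy}). Thus the theorem reduces to the classical assertion that, for an initial datum lying in the domain of the generator, the orbit $t\mapsto T(t)\varphi$ is the unique classical solution of the corresponding evolution equation, exactly as in the analogue \cite[Theorem 6.5]{Ch-Z}.

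For existence, I would verify directly that $u(x,t)=T(t)\varphi(x)$ has all the required properties. For $\varphi\in Dom(\boldsymbol{A}_{\beta})$, the general semigroup theory gives that $T(t)\varphi\in Dom(\boldsymbol{A}_{\beta})$ for every $t\geq0$, that the map $t\mapsto T(t)\varphi$ is continuously differentiable into $L_{0}^{2}(\mathbb{Z}_{p}^{n})$, and that
\[
\frac{d}{dt}T(t)\varphi=-\boldsymbol{A}_{\beta}T(t)\varphi=-T(t)\boldsymbol{A}_{\beta}\varphi .
\]
The second equality expresses that $T(t)$ commutes with its generator on $Dom(\boldsymbol{A}_{\beta})$; it shows in particular that $t\mapsto\boldsymbol{A}_{\beta}T(t)\varphi=T(t)\boldsymbol{A}_{\beta}\varphi$ is continuous, whence $u\in C\left( \left[ 0,\infty\right) ,Dom(\boldsymbol{A}_{\beta})\right) \cap C^{1}\left( \left[ 0,\infty\right) ,L_{0}^{2}(\mathbb{Z}_{p}^{n})\right) $. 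Together with $T(0)\varphi=\varphi$ from Lemma \ref{Lemma_0}, this shows that $u(x,t)=T(t)\varphi(x)$ solves (\ref{Cauchy}).

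For uniqueness, I would use the classical time-reversal argument. Let $u$ and $v$ be two solutions of (\ref{Cauchy}) and set $w(\cdot,s)=u(\cdot,s)-v(\cdot,s)$, so that $w$ solves (\ref{Cauchy}) with $w(\cdot,0)=0$. Fix $t>0$ and define $g(s)=T(t-s)\,w(\cdot,s)$ for $s\in\left[ 0,t\right] $. Differentiating and using $\partial_{s}w=-\boldsymbol{A}_{\beta}w$ together with $\frac{d}{ds}T(t-s)=\boldsymbol{A}_{\beta}T(t-s)$ on the domain, one obtains
\[
g^{\prime}(s)=\boldsymbol{A}_{\beta}T(t-s)\,w(\cdot,s)-T(t-s)\boldsymbol{A}_{\beta}w(\cdot,s)=0,
\]
since $T(t-s)$ commutes with $\boldsymbol{A}_{\beta}$ on $Dom(\boldsymbol{A}_{\beta})$. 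Hence $g$ is constant on $\left[ 0,t\right] $ and $w(\cdot,t)=g(t)=g(0)=T(t)w(\cdot,0)=0$. As $t>0$ was arbitrary, $u\equiv v$.

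I expect the main technical point to be the regularity bookkeeping in the existence part, namely confirming that the orbit stays in $Dom(\boldsymbol{A}_{\beta})=\mathcal{L}_{0}(\mathbb{Z}_{p}^{n})$ and that the derivative identity holds in the strong $L_{0}^{2}$-sense, rather than the uniqueness, which is purely formal once the commutation $T(t)\boldsymbol{A}_{\beta}=\boldsymbol{A}_{\beta}T(t)$ on the domain is justified. Both facts are standard consequences of the Hille--Yosida framework applied to the generator identified in Lemma \ref{Lemma_2}.
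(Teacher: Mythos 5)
Your proposal follows essentially the same route as the paper: it combines Lemma \ref{Lemma_1} (the family $\left\{ T(t)\right\}_{t\geq0}$ is a contraction $C_{0}$-semigroup on $L_{0}^{2}(\mathbb{Z}_{p}^{n})$) with Lemma \ref{Lemma_2} (identification of the generator with $\left(-\boldsymbol{A}_{\beta},\mathcal{L}_{0}(\mathbb{Z}_{p}^{n})\right)$) and then appeals to the standard theory of abstract Cauchy problems, exactly as the paper does by invoking the Hille--Yosida--Phillips theorem and \cite[Theorem 3.1.1 and Proposition 3.4.5]{Cazenave-Haraux}. The only difference is that you unpack the standard existence and uniqueness arguments (orbit regularity and the time-reversal trick) that the paper delegates to those references, so the two proofs coincide in substance.
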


\begin{proof}
By Lemmas \ref{Lemma_1}-\ref{Lemma_2} and the Hille-Yosida-Phillips Theorem,
see e.g. \cite[Theorem 3.4.4]{Cazenave-Haraux}, operator $\left(
-\boldsymbol{A}_{\beta},Dom\left(  \boldsymbol{A}_{\beta}\right)  \right)  $
is $m$-dissipative with dense domain \ in $L_{0}^{2}\left(  \mathbb{Z}_{p}%
^{n}\right)  $, the announced theorem now follows from \cite[Theorem 3.1.1 and
Proposition 3.4.5]{Cazenave-Haraux}.
\end{proof}

\subsection{Heat Traces}

\begin{proposition}
\label{Theorem2} Let $\{\omega_{m}\}_{m\in\mathbb{N\smallsetminus}\left\{
0\right\}  }$ be the complete orthonormal basis of $L_{0}^{2}(\mathbb{Z}%
_{p}^{n})$ as above. Then
\[
K(x-y,t)=\sum\limits_{m=1}^{\infty}e^{-\lambda_{m}t}\omega_{m}(x)\overline
{\omega_{m}(y)}%
\]
where the convergence is uniform on $\mathbb{Z}_{p}^{n}\times\mathbb{Z}%
_{p}^{n}\times\lbrack\epsilon,\infty),$ for every $\epsilon>0.$
\end{proposition}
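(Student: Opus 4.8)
The plan is to recognize $K(x-y,t)$ as the integral kernel of the semigroup operator $T(t)$ and to match it against the spectral kernel $\widetilde{K}(x,y,t):=\sum_{m=1}^{\infty}e^{-\lambda_{m}t}\omega_{m}(x)\overline{\omega_{m}(y)}$. From the definition of $T(t)$ we have $\left(T(t)f\right)(x)=\int_{\mathbb{Z}_{p}^{n}}K(x-y,t)f(y)\,d^{n}y$, so $K(x-y,t)$ is indeed that kernel. First I would record that each $\omega_{m}$ is an eigenfunction of $T(t)$: by Lemma \ref{Lemma_0}(ii), $\widehat{T(t)\omega_{m}}=1_{\mathbb{Q}_{p}^{n}\setminus\mathbb{Z}_{p}^{n}}\,e^{-tA_{\beta}}\,\widehat{\omega_{m}}$, and since $\widehat{\omega_{m}}$ is supported on the sphere $\left\Vert \xi\right\Vert _{p}=p^{1-\gamma}>1$, where $A_{\beta}\equiv\lambda_{m}$ is constant, this yields $T(t)\omega_{m}=e^{-\lambda_{m}t}\omega_{m}$.

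The core estimate is the uniform convergence of $\widetilde{K}$. I would group the eigenfunctions $\omega_{\gamma bk}$ supported in $\mathbb{Z}_{p}^{n}$ (so $\gamma\leq0$) by the level $\ell:=1-\gamma\geq1$. At a fixed level there are $(p^{n}-1)p^{n(\ell-1)}$ eigenfunctions, each satisfying $\left\vert \omega_{\gamma bk}(x)\right\vert =p^{n(\ell-1)/2}$ on its support, so that $\left\vert \omega_{\gamma bk}(x)\overline{\omega_{\gamma bk}(y)}\right\vert \leq p^{n(\ell-1)}$; moreover the corresponding eigenvalue is $\lambda=A_{\beta}(p^{\ell})\geq C_{0}\,p^{\ell\beta}$ by Lemma \ref{eigenfunction} and (\ref{1w}). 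Since all terms of the majorant are positive, the series may be summed in this order, and on $\mathbb{Z}_{p}^{n}\times\mathbb{Z}_{p}^{n}\times[\epsilon,\infty)$ the level-$\ell$ block is bounded by $(p^{n}-1)p^{2n(\ell-1)}e^{-C_{0}\epsilon p^{\ell\beta}}$. The sum over $\ell\geq1$ converges because the doubly exponential factor $e^{-C_{0}\epsilon p^{\ell\beta}}$ overwhelms the exponential growth $p^{2n(\ell-1)}$; by the Weierstrass $M$-test the series defining $\widetilde{K}$ converges uniformly there, and $\widetilde{K}$ is continuous. This is the main quantitative step, though it is easy once the super-exponential decay is noticed.

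Next I would identify $\widetilde{K}$ with the kernel of $T(t)$. Let $\widetilde{T}(t)$ be the integral operator on $L_{0}^{2}(\mathbb{Z}_{p}^{n})$ with kernel $\widetilde{K}(\cdot,\cdot,t)$. Applying it to a basis element and interchanging sum and integral (legitimate by the uniform convergence just established), orthonormality gives $\widetilde{T}(t)\omega_{m'}=\sum_{m}e^{-\lambda_{m}t}\langle\omega_{m'},\omega_{m}\rangle\,\omega_{m}=e^{-\lambda_{m'}t}\omega_{m'}=T(t)\omega_{m'}$. Since $\{\omega_{m}\}$ is a complete orthonormal basis of $L_{0}^{2}(\mathbb{Z}_{p}^{n})$ (Remark \ref{lambda}), the bounded operators $\widetilde{T}(t)$ and $T(t)$ coincide.

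Finally I would upgrade operator equality to kernel equality. As $\widetilde{T}(t)=T(t)$ on $L_{0}^{2}(\mathbb{Z}_{p}^{n})$, for each fixed $x$ and $t$ the continuous functions $y\mapsto\widetilde{K}(x,y,t)$ and $y\mapsto K(x-y,t)$ (the latter continuous by Lemma \ref{lemma2}) induce the same functional on $L_{0}^{2}(\mathbb{Z}_{p}^{n})$, so their difference is orthogonal to $L_{0}^{2}(\mathbb{Z}_{p}^{n})$, hence constant in $y$. To see this constant vanishes, I would check that both kernels have zero $y$-average over $\mathbb{Z}_{p}^{n}$: for $\widetilde{K}$ this is immediate from $\int_{\mathbb{Z}_{p}^{n}}\overline{\omega_{m}(y)}\,d^{n}y=0$ (Lemma \ref{eigenfunction}), while for $K$ a Fubini computation reduces $\int_{\mathbb{Z}_{p}^{n}}K(z,t)\,d^{n}z$ to $\int_{\mathbb{Q}_{p}^{n}\setminus\mathbb{Z}_{p}^{n}}e^{-tA_{\beta}(\xi)}\Omega(\left\Vert \xi\right\Vert _{p})\,d^{n}\xi=0$, since the inner integral $\int_{\mathbb{Z}_{p}^{n}}\chi_{p}(-z\cdot\xi)\,d^{n}z=\Omega(\left\Vert \xi\right\Vert _{p})$ vanishes for $\xi\notin\mathbb{Z}_{p}^{n}$. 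Thus the constant is zero and $\widetilde{K}(x,y,t)=K(x-y,t)$, with the asserted uniform convergence. The only genuinely delicate point is this last identification: working on the average-zero subspace leaves an additive-constant ambiguity, which is removed precisely by the zero-average property of both sides.
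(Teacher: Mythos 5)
Your proof is correct, but it takes a genuinely different route from the paper's. The paper argues softly: it applies the Hilbert--Schmidt theorem to the compact self-adjoint operator $T(1)$ to produce an abstract orthonormal eigenbasis $\{\phi_{m}\}$, uses the semigroup law plus continuity to interpolate $T(t)\phi_{m}=\mu_{m}^{t}\phi_{m}$ for all real $t\geq 0$, shows $\mu_{m}=e^{-\lambda_{m}}\in(0,1)$, invokes Mercer's theorem to expand $K(x-y,t)=\sum_{m}e^{-\lambda_{m}t}\phi_{m}(x)\overline{\phi_{m}(y)}$, and finally replaces $\{\phi_{m}\}$ by the wavelet basis $\{\omega_{m}\}$ via the uniqueness of solutions of the Cauchy problem (Theorem \ref{Theorem1}). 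You instead work directly and quantitatively with the explicit wavelets: you verify $T(t)\omega_{m}=e^{-\lambda_{m}t}\omega_{m}$ on the Fourier side (using that $\widehat{\omega}_{\gamma bk}$ lives on the sphere $\Vert\xi\Vert_{p}=p^{1-\gamma}>1$, where the symbol is the constant $\lambda_{m}$), prove absolute uniform convergence of the spectral sum by counting the $(p^{n}-1)p^{n(\ell-1)}$ eigenfunctions at level $\ell$ with sup-bound $p^{n(\ell-1)/2}$ against the super-exponential decay $e^{-C_{0}\epsilon p^{\ell\beta}}$ from (\ref{1w}), and then identify the two kernels by noting that their difference is orthogonal to $L_{0}^{2}(\mathbb{Z}_{p}^{n})$, hence a constant in $y$, which is killed by the zero $y$-average of both sides. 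Your route buys several things: it avoids Mercer's theorem entirely (whose hypotheses the paper does not verify in detail), it avoids the uniqueness theorem, it gives an explicit tail estimate, and --- importantly --- it confronts head-on the additive-constant ambiguity inherent in working on the mean-zero subspace, which the paper's Mercer-based argument never has to make explicit; the paper's approach, in exchange, is basis-free until the last step and derives, rather than assumes, that the eigenfunctions of $T(t)$ are eigenfunctions of $\boldsymbol{A}_{\beta}$. One point you gloss over, though it is easily repaired: operator equality $\widetilde{T}(t)=T(t)$ gives, for each $f$, agreement of $\int\widetilde{K}(x,y,t)f(y)\,d^{n}y$ and $\int K(x-y,t)f(y)\,d^{n}y$ only for almost every $x$; since both kernels are continuous in $x$ (yours by uniform convergence, $K$ by Lemma \ref{lemma2} together with the decay of $p^{Nn}e^{-tA_{\beta}(p^{N+1})}$ as $N\to\infty$), these integrals are continuous in $x$ and so agree for every $x$, which is what your orthogonality step actually requires.
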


\begin{proof}
By applying Hilbert-Schmidt Theorem to $T(1)$, see e.g. \cite[Theorem
VI.16]{Reed-Simon}, which is self-adjoint and compact, cf. Lemma \ref{Lemma_0}
(iii), there exists a complete orthonormal basis $\{\phi_{m}\}$,
$m\in\mathbb{N\smallsetminus}\left\{  0\right\}  $, of $L_{0}^{2}%
(\mathbb{Z}_{p}^{n})$ consisting of eigenfunctions of $T(1)$, let $\{\mu
_{m}\}$, $m\in\mathbb{N\smallsetminus}\left\{  0\right\}  $, the sequence of
corresponding eigenvalues. In addition, $\mu_{m}\rightarrow0$ as
$m\rightarrow\infty$. By using the fact $\{T(t)\}_{t\geq0}$ form a semigroup,
$T(\frac{l}{k})\phi_{m}=\mu_{m}^{l/k}\phi_{m}$, for every positive rational
number $\frac{l}{k}$. Now from the continuity of $\{T(t)\}_{t\geq0}$ we get
\[
T(t)\phi_{m}=\mu_{m}^{t}\phi_{m}\text{, for }t\in\mathbb{R}_{+}\text{.}%
\]
We note that $\mu_{m}>0$ for every $m$ since
\[
\phi_{m}=\lim\limits_{t\rightarrow0^{+}}T(t)\phi_{m}=\phi_{m}\lim
\limits_{t\rightarrow0^{+}}\mu_{m}^{t}%
\]
implies that $\lim\limits_{t\rightarrow0^{+}}\mu_{m}^{t}=1$ because $\phi
_{m}\neq0$. Hence $\mu_{m}=e^{-\lambda_{m}}$, with$\,\lambda_{m}>0$, because
$\left\Vert T(t)\right\Vert _{L_{0}^{2}\left(  \mathbb{Z}_{p}^{n}\right)  }<1$
for $t>0$, cf. Lemma \ref{Lemma_1} (i),\ implies that $\mu_{m}<1$ and
$\lim\limits_{m\rightarrow\infty}\lambda_{m}=\infty$, since $\lim
\limits_{m\rightarrow\infty}\mu_{m}=0$.

By using Mercer's Theorem, see e.g. \cite[and the references therein]%
{Dodziuk}, \cite{Riesz},
\begin{equation}
K(x-y,t)=\sum\limits_{m=1}^{\infty}e^{-\lambda_{m}t}\phi_{m}(x)\overline
{\phi_{m}(y)}. \label{Formula_Z_+}%
\end{equation}
Now, since $T(t)\phi_{m}(x)=e^{-\lambda_{m}t}\phi_{m}(x)$ is a solution of
problem (\ref{Cauchy}) with initial datum $\phi_{m}$, cf. Theorem
\ref{Theorem1}, and
\[
-\lambda_{m}e^{-\lambda_{m}t}\phi_{m}(x)=\frac{\partial}{\partial
t}(e^{-\lambda_{m}t}\phi_{m}(x))=-\boldsymbol{A}_{\beta}\left(  e^{-\lambda
_{m}t}\phi_{m}(x)\right)  =-e^{-\lambda_{m}t}\boldsymbol{A}_{\beta}\phi
_{m}(x),
\]
then $\phi_{m}(x)$ is an eigenfunction of $\boldsymbol{A}_{\beta}$ with supp
$\phi_{m}\subset\mathbb{Z}_{p}^{n}$. Now, by using that $\boldsymbol{A}%
_{\beta}\omega_{m}=\lambda_{m}\omega_{m}$, see Proposition \ref{Prop1}, we get
that $u=e^{-\lambda_{m}t}\omega_{m}$ is a solution of the following boundary
value problem:
\[
\left\{
\begin{array}
[c]{ll}%
\frac{\partial u\left(  x,t\right)  }{\partial t}=-\boldsymbol{A}_{\beta
}u\left(  x,t\right)  , & u\left(  x,t\right)  \in L_{0}^{2}\left(
\mathbb{Z}_{p}^{n}\right)  \text{, for }t\geq0\\
& \\
u\left(  x,0\right)  =\omega_{m}\left(  x\right)  , & \omega_{m}\left(
x\right)  \in\mathcal{L}_{0}\left(  \mathbb{Z}_{p}^{n}\right)  .
\end{array}
\right.
\]
Then, by Theorem \ref{Theorem1}, the above problem has a unique solution,
which implies that
\[
u\left(  x,t\right)  =T(t)\omega_{m}\left(  x\right)  =e^{-\lambda_{m}t}%
\omega_{m},
\]
hence we can replace $\{\phi_{m}\}$ by $\{\omega_{m}\}$ in (\ref{Formula_Z_+}).
\end{proof}

In the next result, we will use the classical notation $e^{-t\boldsymbol{A}%
_{\beta}}$ for operator $T(t)$ to emphasize the dependency on operator
$\boldsymbol{A}_{\beta}$.

\begin{theorem}
\label{Theorem2A}The operator $e^{-t\boldsymbol{A}_{\beta}}$, for $t>0$, is
trace class and it verifies:

\noindent(i)
\begin{equation}
Tr\left(  e^{-t\boldsymbol{A}_{\beta}}\right)  =\sum\limits_{m=1}^{\infty
}e^{-\lambda_{m}t}=\int\limits_{\mathbb{Q}_{p}^{n}\smallsetminus\mathbb{Z}%
_{p}^{n}}e^{-tA_{\beta}(\xi)}d^{n}\xi, \label{TRACE}%
\end{equation}
for $t>0$;

\noindent(ii) there exist \ positive constants $C$, $C^{\prime}$ such that
$\ $%
\[
Ct^{-\frac{n}{\beta}}\leq Tr\left(  e^{-t\boldsymbol{A}_{\beta}}\right)  \leq
C^{\prime}t^{-\frac{n}{\beta}},
\]
for $t>0$.
\end{theorem}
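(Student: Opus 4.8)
The plan is to establish the trace-class property first, and then extract both the spectral-sum and the integral representations of the trace, before turning to the two-sided asymptotic estimate in part (ii).

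For the trace-class property, I would argue as follows. By Lemma \ref{Lemma_1} the family $\{T(t)\}_{t\geq0}$ is a semigroup, so $e^{-t\boldsymbol{A}_{\beta}}=T(t)=\left(T(t/2)\right)^{2}=T(t/2)\circ T(t/2)$. Each factor $T(t/2)$ is compact and self-adjoint by Lemma \ref{Lemma_0}(iii), hence Hilbert--Schmidt (a compact self-adjoint operator is Hilbert--Schmidt precisely when its eigenvalues are square-summable, which I would verify from the explicit eigenvalues $\mu_{m}=e^{-\lambda_{m}t/2}$ together with the growth $\lambda_{m}\uparrow+\infty$). The composition of two Hilbert--Schmidt operators is trace class, so $T(t)$ is trace class for every $t>0$. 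Once this is known, the trace is the sum of the eigenvalues, and since by Proposition \ref{Theorem2} the eigenfunctions of $T(t)$ may be taken to be the $\omega_{m}$ with eigenvalues $e^{-\lambda_{m}t}$, I obtain $Tr(e^{-t\boldsymbol{A}_{\beta}})=\sum_{m=1}^{\infty}e^{-\lambda_{m}t}$.

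For the integral representation in (i), I would use the Mercer-type expansion from Proposition \ref{Theorem2}, namely
\[
K(x-y,t)=\sum\limits_{m=1}^{\infty}e^{-\lambda_{m}t}\omega_{m}(x)\overline{\omega_{m}(y)},
\]
which converges uniformly on the compact diagonal. Setting $y=x$ and integrating over $\mathbb{Z}_{p}^{n}$, the orthonormality of $\{\omega_{m}\}$ in $L_{0}^{2}(\mathbb{Z}_{p}^{n})$ gives $\int_{\mathbb{Z}_{p}^{n}}K(0,t)\,d^{n}x=\sum_{m}e^{-\lambda_{m}t}$ (uniform convergence justifies interchanging sum and integral). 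It then remains to identify $\int_{\mathbb{Z}_{p}^{n}}K(x-x,t)\,d^{n}x=K(0,t)$ with $\int_{\mathbb{Q}_{p}^{n}\smallsetminus\mathbb{Z}_{p}^{n}}e^{-tA_{\beta}(\xi)}\,d^{n}\xi$; this follows directly from the defining formula for $K(x,t)$ evaluated at $x=0$, since $\chi_{p}(0)=1$ and $vol(\mathbb{Z}_{p}^{n})=1$.

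The genuinely substantive part is the two-sided estimate (ii), which I regard as the main obstacle. Starting from the integral formula in (\ref{TRACE}) and invoking the bounds (\ref{1w}), I would sandwich the trace between two radial integrals,
\[
\int\limits_{\mathbb{Q}_{p}^{n}\smallsetminus\mathbb{Z}_{p}^{n}}e^{-C_{1}t\left\Vert \xi\right\Vert _{p}^{\beta}}d^{n}\xi\leq Tr\left(e^{-t\boldsymbol{A}_{\beta}}\right)\leq\int\limits_{\mathbb{Q}_{p}^{n}\smallsetminus\mathbb{Z}_{p}^{n}}e^{-C_{0}t\left\Vert \xi\right\Vert _{p}^{\beta}}d^{n}\xi,
\]
and then evaluate a model integral of the form $I(c,t)=\int_{\mathbb{Q}_{p}^{n}\smallsetminus\mathbb{Z}_{p}^{n}}e^{-ct\Vert\xi\Vert_{p}^{\beta}}d^{n}\xi$. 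Decomposing $\mathbb{Q}_{p}^{n}\smallsetminus\mathbb{Z}_{p}^{n}$ into spheres $\Vert\xi\Vert_{p}=p^{j}$ for $j\geq1$, each of volume $(1-p^{-n})p^{nj}$, reduces $I(c,t)$ to the series $(1-p^{-n})\sum_{j=1}^{\infty}p^{nj}e^{-ctp^{j\beta}}$. The delicate point is that this is \emph{not} a smooth Laplace integral but a lacunary-type sum over the discrete value group, so one cannot simply quote a real-variable asymptotic; instead I would show it behaves like $t^{-n/\beta}$ by comparing the sum to its largest relevant term (dominant balance near $p^{j\beta}\sim 1/(ct)$) and bounding the geometric tails above and below, or equivalently by a substitution $\xi\mapsto t^{1/\beta}\xi$ combined with the scale-invariance of Haar measure to reduce everything to a fixed integral times $t^{-n/\beta}$, up to controllable periodic-in-$\log t$ oscillations that stay within fixed positive constants. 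Tracking the constants $C,C'$ through the upper and lower comparison then yields the stated bounds for all $t>0$.
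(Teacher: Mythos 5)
Your treatment of the trace-class property and of part (i) is essentially the paper's own argument (Mercer expansion from Proposition \ref{Theorem2}, restriction to the diagonal, termwise integration over $\mathbb{Z}_p^n$ using orthonormality, and the defining formula for $K(0,t)$); the paper reaches trace class by computing $\sum_m\langle T(t)\omega_m,\omega_m\rangle$ directly rather than via your factorization $T(t)=T(t/2)^2$ into Hilbert--Schmidt operators, but that variant is legitimate. One point needs repair: your justification of the Hilbert--Schmidt property ``from the growth $\lambda_m\uparrow+\infty$'' is a non sequitur, since $\lambda_m\to\infty$ alone does not give $\sum_m e^{-\lambda_m t}<\infty$ (take $\lambda_m=\log\log m$: then $\sum_m e^{-\lambda_m t}=\sum_m(\log m)^{-t}$ diverges for every $t>0$). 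You must use the quantitative spectral data, as the paper implicitly does: by Remark \ref{lambda} the eigenvalue $A_\beta(p^j)\geq C_0p^{j\beta}$ occurs with multiplicity $p^{nj}(1-p^{-n})$, so
\[
\sum_m e^{-\lambda_m t}\leq(1-p^{-n})\sum_{j=1}^{\infty}p^{nj}e^{-C_0tp^{j\beta}}<\infty,
\]
which is exactly the bound the paper extracts from (\ref{1w}) and $e^{-Ct\Vert\xi\Vert_p^\beta}\in L^1$. With that patch, part (i) is correct.

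The genuine gap is in your lower bound for part (ii) --- and, to your credit, your more explicit attempt exposes a point that the paper's one-line proof glosses over, since the inequality it quotes, $\int_{\mathbb{Q}_p^n}e^{-Ct\Vert\xi\Vert_p^\beta}d^n\xi\leq Dt^{-n/\beta}$, yields only the upper bound. Your dominant-balance index $j^*$ with $p^{j^*\beta}\asymp 1/(C_1t)$ lies in the admissible range $j\geq1$ only when $t$ is bounded; once $t$ is large, the truncated sum $\sum_{j\geq1}p^{nj}e^{-C_1tp^{j\beta}}$ is dominated by its first term and is $O(e^{-C_1p^{\beta}t})$. Indeed $Tr(e^{-t\boldsymbol{A}_\beta})=\sum_m e^{-\lambda_m t}\leq e^{-\lambda_1t/2}\sum_m e^{-\lambda_m/2}$ for $t\geq1$, so the trace decays \emph{exponentially} at infinity, and no constant $C>0$ can satisfy $Ct^{-n/\beta}\leq Tr(e^{-t\boldsymbol{A}_\beta})$ for all $t>0$; the two-sided bound is provable only on regions $0<t\leq T$ (the small-time regime, which is what the eigenvalue-counting application actually uses). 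Your proposed alternative, the substitution $\xi\mapsto t^{1/\beta}\xi$, is also not literally available: only scalings by powers of $p$ act on $\mathbb{Q}_p^n$ and its Haar measure (hence the log-periodic constants you mention), and even the correct discrete version --- the index shift $j\mapsto j+k$ with $p^{k\beta}\asymp t^{-1}$ --- exploits the scale invariance of the full lattice sum over $j\in\mathbb{Z}$, i.e.\ of the integral over all of $\mathbb{Q}_p^n$, not over the truncated domain $\mathbb{Q}_p^n\smallsetminus\mathbb{Z}_p^n$ relevant here. So as written your argument for the lower bound cannot be completed for all $t>0$; you should either restrict the range of $t$ or note explicitly that at infinity $Tr(e^{-t\boldsymbol{A}_\beta})\asymp e^{-\lambda_1t}$.
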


\begin{proof}
By Proposition \ref{Theorem2} and the definition of $K(x,t)$:%
\begin{equation}
K(0,t)=\int\limits_{\mathbb{Q}_{p}^{n}\smallsetminus\mathbb{Z}_{p}^{n}%
}e^{-tA_{\beta}(\xi)}d^{n}\xi=\sum\limits_{m=1}^{\infty}e^{-\lambda_{m}%
t}\left\vert \omega_{m}(x)\right\vert ^{2}, \label{Eq1}%
\end{equation}
for $t>0$. By using the Dominated Convergence Theorem and the fact that
$\sum_{m}e^{-\lambda_{m}t}$ converges for $t>0$, we can integrate both sides
of (\ref{Eq1}) with respect to the variable $x$ over $\mathbb{Z}_{p}^{n}$ to
get
\begin{equation}
\int\limits_{\mathbb{Q}_{p}^{n}\smallsetminus\mathbb{Z}_{p}^{n}}e^{-tA_{\beta
}(\xi)}d^{n}\xi=\sum\limits_{m=1}^{\infty}e^{-\lambda_{m}t}\text{, for }t>0.
\label{Eq2}%
\end{equation}
We recall that
\begin{equation}
e^{-C_{1}t\Vert\xi\Vert_{p}^{\beta}}\leq e^{-tA_{\beta}(\xi)}\leq
e^{-C_{0}t\Vert\xi\Vert_{p}^{\beta}}, \label{Eq4}%
\end{equation}
cf. (\ref{1w}), and that $e^{-Ct\Vert\xi\Vert_{p}^{\beta}}\in L^{1}$, for
$t>0$ and for any positive constant $C$, then the series on the right-hand
side of (\ref{Eq2}) converges. Now%
\begin{align*}
Tr\left(  e^{-t\boldsymbol{A}_{\beta}}\right)   &  =\sum\limits_{m=1}^{\infty
}\left\langle e^{-t\boldsymbol{A}_{\beta}}\omega_{m},\omega_{m}\right\rangle
=\sum\limits_{m=1}^{\infty}e^{-\lambda_{m}t}\left\Vert \omega_{m}\right\Vert
_{L^{2}}^{2}\\
&  =\sum\limits_{m=1}^{\infty}e^{-\lambda_{m}t}<\infty\text{, for }t>0\text{,}%
\end{align*}
i.e. $e^{-t\boldsymbol{A}_{\beta}}$ is trace class and the formula announced
in (i) holds. The estimations for $Tr\left(  e^{-t\boldsymbol{A}_{\beta}%
}\right)  $ follows from (\ref{Eq4}), by using $\int_{\mathbb{Q}_{p}^{n}%
}e^{-Ct\Vert\xi\Vert_{p}^{\beta}}d^{n}\xi\leq Dt^{-\frac{n}{\beta}}$ for $t>0$.
\end{proof}

\section{Analytic Continuation of Spectral Zeta Functions}

\begin{remark}
\label{nota_holomorfia}(i) We set for $a>0$, $a^{s}:=e^{s\ln a}$. Then $a^{s}$
becomes a holomorphic function on $\operatorname{Re}(s)>0$.

\noindent(ii) We recall the following fact, see e.g. \cite[Lemma 5.3.1]%
{Igusa}. Let $\left(  X,d\mu\right)  $ denote a measure space, $U$ a non-empty
open subset of $\mathbb{C}$, \ and $f:X\times U\rightarrow\mathbb{C}$ a
measurable function. Assume that: (1) if $\mathcal{C}$ is a compact subset of
$U$, there exists an integrable function $\phi_{C}\geq0$ on $X$ satisfying
$\left\vert f\left(  \xi,s\right)  \right\vert \leq\phi_{C}\left(  \xi\right)
$ for all $\left(  \xi,s\right)  \in X\times\mathcal{C}$; (2) $f\left(
\xi,\cdot\right)  $ is holomorphic on $U$ for every $x$ in $X$. Then $\int
_{X}f\left(  \xi,s\right)  d\mu$ is a holomorphic function on $U$.
\end{remark}

\begin{proposition}
\label{Prop2} The spectral zeta function for $\boldsymbol{A}_{\beta}$ is a
holomorphic function on $\operatorname{Re}(s)>\frac{n}{\beta}$ and satisfies%
\begin{equation}
\zeta(s;\boldsymbol{A}_{\beta})=\int\limits_{\mathbb{Q}_{p}^{n}\smallsetminus
\mathbb{Z}_{p}^{n}}\frac{d^{n}\xi}{A_{\beta}^{s}(\xi)}\text{ for
}\operatorname{Re}(s)>\frac{n}{\beta}\text{.} \label{Formula_Zeta}%
\end{equation}
In particular $\zeta(s;\boldsymbol{A}_{\beta})$ does not depend on the basis
of $L_{0}^{2}(\mathbb{Z}_{p}^{n})$ used in Definition \ref{Definition_zeta}.
\end{proposition}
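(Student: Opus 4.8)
The plan is to prove the integral representation (\ref{Formula_Zeta}) by a direct term-by-term comparison, exploiting that $A_{\beta}$ is radial so that both the eigenvalue sum defining $\zeta(s;\boldsymbol{A}_{\beta})$ and the Igusa-type integral on the right of (\ref{Formula_Zeta}) decompose over the spheres $S_{j}^{n}=\{\xi\in\mathbb{Q}_{p}^{n}:\left\Vert \xi\right\Vert _{p}=p^{j}\}$, $j\geq 1$. First I would set up the eigenvalue/multiplicity dictionary. By Lemma \ref{eigenfunction} and Remark \ref{lambda}, the eigenfunctions $\omega_{\gamma bk}$ supported in $\mathbb{Z}_{p}^{n}$ are precisely those with $\gamma\leq 0$ and $b\in p^{\gamma}\mathbb{Z}_{p}^{n}/\mathbb{Z}_{p}^{n}$, their eigenvalue being $A_{\beta}(p^{1-\gamma})$. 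Setting $j=1-\gamma\geq 1$, the value $A_{\beta}(p^{j})$ occurs with multiplicity $(p^{n}-1)p^{n(j-1)}=(1-p^{-n})p^{nj}$, exactly as in the computation of $\zeta(s;D_{T}^{\beta})$ in Subsection \ref{Example1}. Summing with multiplicities, Definition \ref{Definition_zeta} gives
\[
\zeta(s;\boldsymbol{A}_{\beta})=\sum_{m=1}^{\infty}\frac{1}{\lambda_{m}^{s}}=(1-p^{-n})\sum_{j=1}^{\infty}\frac{p^{nj}}{A_{\beta}(p^{j})^{s}}.
\]

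Next I would settle convergence and holomorphy on $\operatorname{Re}(s)>\frac{n}{\beta}$. The lower bound in (\ref{1w}) yields $A_{\beta}(p^{j})\geq C_{0}p^{j\beta}$, so the general term above is bounded by $C_{0}^{-\operatorname{Re}(s)}(1-p^{-n})p^{j(n-\beta\operatorname{Re}(s))}$, and the geometric series $\sum_{j}p^{j(n-\beta\operatorname{Re}(s))}$ converges exactly when $\beta\operatorname{Re}(s)>n$; this gives absolute convergence of the defining series on the asserted half-plane. For the holomorphy of the integral I would invoke the criterion recalled in Remark \ref{nota_holomorfia}(ii): for each fixed $\xi\in\mathbb{Q}_{p}^{n}\setminus\mathbb{Z}_{p}^{n}$ the map $s\mapsto A_{\beta}(\xi)^{-s}=e^{-s\ln A_{\beta}(\xi)}$ is entire (here $A_{\beta}(\xi)\geq C_{0}p^{\beta}>0$), while on any compact subset $\mathcal{C}$ of $\{\operatorname{Re}(s)>\frac{n}{\beta}\}$, setting $\sigma_{0}=\min_{s\in\mathcal{C}}\operatorname{Re}(s)>\frac{n}{\beta}$, the estimate $|A_{\beta}(\xi)^{-s}|\leq C_{0}^{-\sigma_{0}}\left\Vert \xi\right\Vert _{p}^{-\beta\sigma_{0}}$ provides an integrable dominating function on $\mathbb{Q}_{p}^{n}\setminus\mathbb{Z}_{p}^{n}$ since $\beta\sigma_{0}>n$.

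The heart of the matter is the explicit evaluation of the integral, which I would obtain by partitioning $\mathbb{Q}_{p}^{n}\setminus\mathbb{Z}_{p}^{n}=\bigsqcup_{j\geq 1}S_{j}^{n}$, using that $A_{\beta}$ is constant and equal to $A_{\beta}(p^{j})$ on $S_{j}^{n}$ and that $\mathrm{vol}(S_{j}^{n})=p^{nj}-p^{n(j-1)}=(1-p^{-n})p^{nj}$:
\[
\int_{\mathbb{Q}_{p}^{n}\setminus\mathbb{Z}_{p}^{n}}\frac{d^{n}\xi}{A_{\beta}^{s}(\xi)}=\sum_{j=1}^{\infty}\frac{\mathrm{vol}(S_{j}^{n})}{A_{\beta}(p^{j})^{s}}=(1-p^{-n})\sum_{j=1}^{\infty}\frac{p^{nj}}{A_{\beta}(p^{j})^{s}}.
\]
This is identical to the series computed for $\zeta(s;\boldsymbol{A}_{\beta})$, which proves (\ref{Formula_Zeta}). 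Since its right-hand side is defined solely through the symbol $A_{\beta}$ and makes no reference to the orthonormal basis $\{\omega_{m}\}$ of Definition \ref{Definition_zeta}, the independence of the basis follows at once.

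As an optional cross-check, (\ref{Formula_Zeta}) can also be recovered from the heat trace (\ref{TRACE}) of Theorem \ref{Theorem2A} via a Mellin transform in $t$, using $\lambda_{m}^{-s}=\Gamma(s)^{-1}\int_{0}^{\infty}t^{s-1}e^{-\lambda_{m}t}\,dt$ together with $\int_{0}^{\infty}t^{s-1}e^{-tA_{\beta}(\xi)}\,dt=\Gamma(s)A_{\beta}(\xi)^{-s}$, once the integrability of $t^{s-1}\,\mathrm{Tr}(e^{-t\boldsymbol{A}_{\beta}})$ over $(0,\infty)$ and the attendant Fubini interchange are verified. I expect no genuine obstacle beyond the bookkeeping; the single point requiring care is the precise matching of the eigenvalue multiplicities with the sphere volumes, which is exactly what makes the two sides of (\ref{Formula_Zeta}) agree term by term.
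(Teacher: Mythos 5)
Your proposal is correct and follows essentially the same route as the paper's own proof: the identical eigenvalue/multiplicity count from Lemma \ref{eigenfunction} and Remark \ref{lambda}, the same identification of the resulting series with the integral decomposed over the spheres $\left\Vert \xi\right\Vert _{p}=p^{j}$, the same convergence bound from (\ref{1w}), and the same appeal to Remark \ref{nota_holomorfia}(ii) for holomorphy. Even your optional Mellin-transform cross-check reproduces the remark the paper places immediately after the proposition.
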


\begin{proof}
By using Proposition \ref{Prop1} and Remark \ref{lambda}, the eigenvalues have
the form $A_{\beta}({p}^{1-\gamma})$, with $\gamma\leq0$, and the
corresponding multiplicity is the cardinality of $p^{\gamma}\mathbb{Z}_{p}%
^{n}/\mathbb{Z}_{p}^{n}$ times the cardinality of the set of $k$'s, i.e.
$p^{-\gamma n}\left(  p^{n}-1\right)  $, then
\begin{align*}
\zeta(s;\boldsymbol{A}_{\beta})  &  =\sum\limits_{\gamma\leq0}\frac{p^{-\gamma
n}\left(  p^{n}-1\right)  }{A_{\beta}^{s}(p^{1-\gamma})}=\sum\limits_{m=1}%
^{\infty}\frac{p^{mn}\left(  1-p^{-n}\right)  }{A_{\beta}^{s}(p^{m})}%
=\sum\limits_{m=1}^{\infty}\int\limits_{\left\Vert \xi\right\Vert _{p}=p^{m}%
}\frac{d^{n}\xi}{A_{\beta}^{s}(\left\Vert \xi\right\Vert _{p})}\\
&  =\int\limits_{\mathbb{Q}_{p}^{n}\smallsetminus\mathbb{Z}_{p}^{n}}%
\frac{d^{n}\xi}{A_{\beta}^{s}(\xi)},
\end{align*}
and by (\ref{1w}),
\[
|\zeta(s;\boldsymbol{A}_{\beta})|\leq\frac{\left(  1-p^{-n}\right)
}{C^{Re(s)}}\sum\limits_{m=1}^{\infty}p^{m\left(  n-\beta Re(s)\right)
}<\infty\text{ for }Re(s)>\frac{n}{\beta}.
\]
To establish the holomorphy on $\operatorname{Re}(s)>\frac{n}{\beta}$ we use
Remark \ref{nota_holomorfia} (ii). Take $X=\mathbb{Q}_{p}^{n}\smallsetminus
\mathbb{Z}_{p}^{n}$, $d\mu=d^{n}\xi$, $U=\left\{  s\in\mathbb{C}%
;\operatorname{Re}(s)>\frac{n}{\beta}\right\}  $ and $f(\xi,s)=A_{\beta}%
^{-s}(\Vert\xi\Vert_{p})$. We now verify the two conditions established in
Remark \ref{nota_holomorfia} (ii). Take $\mathcal{C}$ a compact subset of $U$,
by (\ref{1w})
\[
\left\vert \frac{1}{A_{\beta}^{s}(\Vert\xi\Vert_{p})}\right\vert \leq\frac
{1}{C^{\operatorname{Re}(s)}\left\Vert \xi\right\Vert _{p}^{\beta
\operatorname{Re}(s)}},
\]
where $C$ is a positive constant. Since $\operatorname{Re}(s)$\ belongs to a
compact subset of
\[
\left\{  s\in\mathbb{R};\operatorname{Re}(s)>\frac{n}{\beta}\right\}  ,
\]
we may assume without loss of generality that $\operatorname{Re}(s)\in\left[
\gamma_{0},\gamma_{1}\right]  $ with $\gamma_{0}>\frac{n}{\beta}$, then
\[
\frac{1}{C^{\operatorname{Re}(s)}\left\Vert \xi\right\Vert _{p}^{\beta
\operatorname{Re}(s)}}\leq B(\mathcal{C})\frac{1}{\left\Vert \xi\right\Vert
_{p}^{\beta\gamma_{0}}}\in L^{1},
\]
where $B(\mathcal{C})$ is a positive constant. Condition (2) in Remark
\ref{nota_holomorfia} (ii), follows from Remark \ref{nota_holomorfia} (i), by
noting that $\left(  A_{\beta}(\Vert\xi\Vert_{p})\right)  ^{-s}=\exp\left(
-s\ln A_{\beta}(\Vert\xi\Vert_{p})\right)  $ with $A_{\beta}(\Vert\xi\Vert
_{p})>0$ for $\Vert\xi\Vert_{p}>1$.
\end{proof}

\begin{remark}
We notice that formula (\ref{Formula_Zeta}) can be obtained by taking the
Mellin transform in (\ref{TRACE}). Indeed,%
\begin{equation}
\int\limits_{0}^{\infty}\left\{  \int\limits_{\mathbb{Q}_{p}^{n}%
\smallsetminus\mathbb{Z}_{p}^{n}}e^{-tA_{\beta}(\Vert\xi\Vert_{p})}%
t^{s-1}d^{n}\xi\right\}  dt=\int\limits_{0}^{\infty}\left\{  \sum
\limits_{m=1}^{\infty}e^{-\lambda_{m}t}t^{s-1}\right\}  dt=\Gamma\left(
s\right)  \zeta(s;\boldsymbol{A}_{\beta}),\nonumber
\end{equation}
for $\operatorname{Re}(s)>1$, where $\Gamma\left(  s\right)  $ denotes the
Archimedean Gamma function. Now, By changing variables as $y=A_{\beta}%
(\Vert\xi\Vert_{p})t$, with $\xi$ fixed, we have%
\[
\zeta(s;\boldsymbol{A}_{\beta})=\int\limits_{\mathbb{Q}_{p}^{n}\smallsetminus
\mathbb{Z}_{p}^{n}}\frac{d^{n}\xi}{A_{\beta}^{s}(\Vert\xi\Vert_{p})}\text{ for
}\operatorname{Re}(s)>\max\left\{  1,\frac{n}{\beta}\right\}  .
\]

\end{remark}

\begin{lemma}
\label{lemma5}$\zeta(s;\boldsymbol{A}_{\beta})$ has a simple pole at
$s=\frac{n}{\boldsymbol{\beta}}$.
\end{lemma}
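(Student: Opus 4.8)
The plan is to read the pole off the explicit series expansion of Proposition \ref{Prop2}, using the two-sided bound (\ref{1w}) to compare $\zeta(s;\boldsymbol{A}_\beta)$ against the Taibleson zeta function $\zeta(s;D_T^\beta)$ computed in Subsection \ref{Example1}. Recall from Proposition \ref{Prop2} that, since $A_\beta$ is radial and the sphere $\Vert\xi\Vert_p=p^m$ has measure $(1-p^{-n})p^{mn}$,
\[
\zeta(s;\boldsymbol{A}_\beta)=\int_{\mathbb{Q}_p^n\smallsetminus\mathbb{Z}_p^n}\frac{d^n\xi}{A_\beta^s(\xi)}=(1-p^{-n})\sum_{m=1}^\infty\frac{p^{mn}}{A_\beta(p^m)^s},\qquad\operatorname{Re}(s)>\tfrac{n}{\beta}.
\]

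First I would restrict to real $s>n/\beta$ and insert $C_0 p^{m\beta}\le A_\beta(p^m)\le C_1 p^{m\beta}$ from (\ref{1w}). Since $t\mapsto t^{-s}$ is decreasing for real $s>0$, this gives $C_1^{-s}p^{-m\beta s}\le A_\beta(p^m)^{-s}\le C_0^{-s}p^{-m\beta s}$ for every $m$, and all the weights $(1-p^{-n})p^{mn}$ are strictly positive. Summing yields the sandwich
\[
C_1^{-s}\,\zeta(s;D_T^\beta)\le\zeta(s;\boldsymbol{A}_\beta)\le C_0^{-s}\,\zeta(s;D_T^\beta),
\]
where $\zeta(s;D_T^\beta)=(1-p^{-n})\dfrac{p^{n-\beta s}}{1-p^{n-\beta s}}$ is the rational function computed in Subsection \ref{Example1}.

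Next I would exploit that $\zeta(s;D_T^\beta)$ has a genuine simple pole at $s=n/\beta$: writing $s=n/\beta+\delta$ one has $1-p^{n-\beta s}=1-p^{-\beta\delta}\sim(\beta\ln p)\,\delta$ as $\delta\to0$, so $(s-\tfrac{n}{\beta})\zeta(s;D_T^\beta)\to\rho:=\dfrac{1-p^{-n}}{\beta\ln p}>0$ as $s\to(n/\beta)^+$. Multiplying the sandwich by $(s-\tfrac{n}{\beta})>0$, letting $s\downarrow n/\beta$, and using the continuity of $s\mapsto C_i^{-s}$ at $s=n/\beta$, I obtain
\[
C_1^{-n/\beta}\rho\le\liminf_{s\to(n/\beta)^+}(s-\tfrac{n}{\beta})\zeta(s;\boldsymbol{A}_\beta)\le\limsup_{s\to(n/\beta)^+}(s-\tfrac{n}{\beta})\zeta(s;\boldsymbol{A}_\beta)\le C_0^{-n/\beta}\rho,
\]
with both outer constants strictly positive and finite. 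Hence $\zeta(s;\boldsymbol{A}_\beta)$ blows up at exactly the order $(s-n/\beta)^{-1}$, i.e. it has a simple pole at $s=n/\beta$.

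The main obstacle is conceptual rather than computational: for a general radial symbol the ratio $A_\beta(p^m)/p^{m\beta}$ only oscillates inside $[C_0,C_1]$, so $\zeta(s;\boldsymbol{A}_\beta)$ need not admit a meromorphic continuation across $\operatorname{Re}(s)=n/\beta$, and one cannot isolate a single residue by subtracting a multiple of $\zeta(s;D_T^\beta)$. The content of ``simple pole'' must therefore be the order-one blow-up rate, and the delicate half is the lower bound, which rules out a singularity weaker than a first-order pole; this is exactly what the positivity of all the coefficients together with the explicit simple pole of $\zeta(s;D_T^\beta)$ provides. Equivalently, one could reach the same conclusion by Mellin-inverting the two-sided heat-trace estimate $Ct^{-n/\beta}\le Tr(e^{-t\boldsymbol{A}_\beta})\le C't^{-n/\beta}$ of Theorem \ref{Theorem2A}(ii), splitting $\int_0^\infty Tr(e^{-t\boldsymbol{A}_\beta})t^{s-1}\,dt$ at $t=1$ and using $\Gamma(n/\beta)>0$.
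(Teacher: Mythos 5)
Your proposal is correct and follows essentially the same route as the paper: both sandwich $\zeta(\sigma;\boldsymbol{A}_{\beta})$ for real $\sigma>\frac{n}{\beta}$ between constant multiples of the explicit Taibleson-type quantity $\left(1-p^{-n}\right)\frac{p^{n-\beta\sigma}}{1-p^{n-\beta\sigma}}$ via (\ref{1w}), and read off the simple pole from the simple zero of $1-p^{n-\beta\sigma}$ at $\sigma=\frac{n}{\beta}$. If anything, your write-up is slightly more complete: the paper displays only the upper bound (with $C_{0}$), whereas the asserted positivity of $\lim_{\sigma\rightarrow n/\beta}\left(1-p^{-\beta\sigma+n}\right)\zeta(\sigma;\boldsymbol{A}_{\beta})$ also needs the complementary lower bound, which you supply explicitly via $C_{1}^{-s}\,\zeta(s;D_{T}^{\beta})$ (and you correctly carry the exponent $s$ on the constants, where the paper writes $C_{0}^{-1}$).
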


\begin{proof}
Set $\sigma\in\mathbb{R}_{+}$, since%
\[
\zeta(\sigma;\boldsymbol{A}_{\beta})\leq\frac{1}{C_{0}}\int\limits_{\mathbb{Q}%
_{p}^{n}\smallsetminus\mathbb{Z}_{p}^{n}}\frac{d^{n}\xi}{\left\Vert
\xi\right\Vert _{p}^{\boldsymbol{\beta}\sigma}}=\frac{\left(  1-p^{-n}\right)
p^{-\beta\sigma+n}}{C_{0}\left(  1-p^{-\boldsymbol{\beta}\sigma+n}\right)
}\text{ for }\sigma>\frac{n}{\boldsymbol{\beta}},
\]
we have
\begin{equation}
\lim_{\sigma\rightarrow\frac{n}{\boldsymbol{\beta}}}\left(
1-p^{-\boldsymbol{\beta}\sigma+n}\right)  \zeta(\sigma;\boldsymbol{A}_{\beta
})>0. \label{part_I}%
\end{equation}
The assertion follows from (\ref{part_I}), by using the fact that
$1-p^{-\boldsymbol{\beta}\sigma+n}$ has a simple zero at $\frac{n}%
{\boldsymbol{\beta}}$. Indeed,%
\begin{multline*}
1-p^{-\boldsymbol{\beta}\sigma+n}=1-\exp\left\{  \left(  -\boldsymbol{\beta
}\sigma+n\right)  \ln p\right\} \\
=\left\{  \boldsymbol{\beta}\ln p\right\}  \left(  \sigma-\frac{n}%
{\boldsymbol{\beta}}\right)  +O(\left(  \sigma-\frac{n}{\boldsymbol{\beta}%
}\right)  ^{2}),
\end{multline*}
where $O$ is an analytic function satisfying $O\left(  0\right)  =0$.
\end{proof}

\begin{theorem}
\label{Theorem3}The spectral zeta function $\zeta(s;\boldsymbol{A}_{\beta})$
satisfies the following:

\noindent(i) $\zeta(s;\boldsymbol{A}_{\beta})$\ is a holomorphic function on
$\operatorname{Re}(s)>\frac{n}{\boldsymbol{\beta}}$, and on this region is
given by formula (\ref{Formula_Zeta});

\noindent(ii) $\zeta(s;\boldsymbol{A}_{\beta})$ has a simple pole \ at
$s=\frac{n}{\boldsymbol{\beta}}$, however, this pole is not necessarily unique;

\noindent(iii) set $N(T):=\sum_{\lambda_{m}\leq T}1$, for $T\geq0$, then
$N(T)=O\left(  T^{\frac{n}{\boldsymbol{\beta}}}\right)  $.
\end{theorem}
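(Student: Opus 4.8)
The plan is to handle the three parts by combining the results already established, with the main new content being the estimate in part (iii). For part (i), there is nothing new: it is exactly the statement of Proposition \ref{Prop2}, so I would simply cite that proposition, which gives both the holomorphy on $\operatorname{Re}(s)>\frac{n}{\boldsymbol{\beta}}$ and the integral representation (\ref{Formula_Zeta}). For the first assertion of part (ii), that $\zeta(s;\boldsymbol{A}_{\beta})$ has a simple pole at $s=\frac{n}{\boldsymbol{\beta}}$, I would invoke Lemma \ref{lemma5} directly. The clause ``this pole is not necessarily unique'' I would justify by pointing back to the explicit computation for the Taibleson operator $D_{T}^{\beta}$ in Section \ref{Example1}, where $\zeta(s;D_{T}^{\beta})=\left(1-p^{-n}\right)\frac{p^{n-\beta s}}{1-p^{n-\beta s}}$ has poles on the entire arithmetic progression $\frac{n}{\beta}+\frac{2\pi i\mathbb{Z}}{\beta\ln p}$, showing that infinitely many poles can occur on the boundary line $\operatorname{Re}(s)=\frac{n}{\boldsymbol{\beta}}$.

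The substantive work is part (iii), the bound $N(T)=O\bigl(T^{\frac{n}{\boldsymbol{\beta}}}\bigr)$. Here I would avoid any Tauberian machinery (the introduction already flags that the infinitely many boundary poles obstruct a standard Ikehara argument) and instead estimate $N(T)$ directly from the explicit description of the eigenvalues. By Proposition \ref{Prop1} and Remark \ref{lambda}, the eigenvalues are $\lambda=A_{\beta}(p^{m})$ indexed by $m\in\mathbb{N}\smallsetminus\{0\}$ (writing $m=1-\gamma$ with $\gamma\le 0$), each with multiplicity $p^{mn}\left(1-p^{-n}\right)$. Using the two-sided bound (\ref{1w}) in the form $C_{0}p^{m\beta}\le A_{\beta}(p^{m})\le C_{1}p^{m\beta}$, I would observe that the condition $\lambda_{m}\le T$ forces, via the lower bound $A_{\beta}(p^{m})\ge C_{0}p^{m\beta}$, an upper cutoff $m\le M(T)$ where $p^{M(T)\beta}\le T/C_{0}$, i.e. $M(T)\approx \frac{\log(T/C_{0})}{\beta\log p}$. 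Summing the multiplicities up to this cutoff gives
\begin{equation*}
N(T)\le\sum_{m=1}^{M(T)}p^{mn}\left(1-p^{-n}\right)\le p^{\,n\left(M(T)+1\right)}\le C'\,\bigl(T/C_{0}\bigr)^{\frac{n}{\beta}},
\end{equation*}
since the geometric sum is dominated by its last term and $p^{nM(T)}\le \bigl(T/C_{0}\bigr)^{n/\beta}$. This yields $N(T)=O\bigl(T^{\frac{n}{\boldsymbol{\beta}}}\bigr)$ as claimed.

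The only real subtlety, and the step I would be most careful about, is the bookkeeping that converts the spectral condition $A_{\beta}(p^{m})\le T$ into a clean cutoff on $m$. Because $A_{\beta}$ is only pinched between two powers of $\|\xi\|_{p}^{\beta}$ and need not be exactly a power, one must be consistent about which inequality in (\ref{1w}) controls the counting: the \emph{lower} bound $A_{\beta}(p^{m})\ge C_{0}p^{m\beta}$ is what forces $m$ to be small when $\lambda_{m}\le T$, hence it is the correct tool for an \emph{upper} bound on $N(T)$. (A matching lower bound $N(T)\gtrsim T^{n/\beta}$ would instead use the upper estimate $A_{\beta}(p^{m})\le C_{1}p^{m\beta}$, but part (iii) only asks for the $O$-estimate.) Beyond that, everything reduces to summing a finite geometric series, and the constant $C'$ can be absorbed into the $O$-notation, so no delicate analysis is required once the cutoff is pinned down correctly.
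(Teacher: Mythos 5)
Your proposal is correct and follows essentially the same route as the paper: parts (i) and (ii) are handled by the identical citations (Proposition \ref{Prop2}, Lemma \ref{lemma5}, and the Taibleson example of Section \ref{Example1}), and for part (iii) the paper's proof is precisely the one-line observation that the result follows from $\lambda_{m}=A_{\beta}(p^{m})$ and $\mathrm{mult}(\lambda_{m})=p^{nm}\left(1-p^{-n}\right)$, whose details you have simply written out. Your explicit cutoff argument via the lower bound in (\ref{1w}) and the geometric sum (which in fact telescopes to $p^{nM(T)}-1$) is exactly the intended computation, including the correct remark that the lower pinching bound is what controls the upper estimate on $N(T)$.
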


\begin{proof}
(i) See Proposition \ref{Prop2}. (ii) The first part was established in Lemma
\ref{lemma5}. Take $\boldsymbol{A}_{\beta}$ to be the Taibleson operator
$D_{T}^{\boldsymbol{\beta}}$, then $\zeta(s;\boldsymbol{D}_{T}^{\beta})$ has a
meromorphic continuation to the whole complex plane as a rational function of
$p^{-s}$ having poles in the set $\frac{n}{\beta}+\frac{2\pi i\mathbb{Z}%
}{\beta\ln p}$, see Example \ref{Example1}. (iii) The result follows from the
formulas%
\[
\lambda_{m}=A_{\beta}(p^{m})\text{ and mult}\left(  \lambda_{m}\right)
=p^{nm}\left(  1-p^{-n}\right)  \text{, for }m\in\mathbb{N\smallsetminus
}\left\{  0\right\}  .
\]

\end{proof}

\begin{remark}
\label{Nota3}The fact that $\zeta(s;\boldsymbol{A}_{\beta})$\ may have several
poles on the line $\operatorname{Re}(s)=\frac{n}{\beta}$ prevent us of using
the classical Ikehara Tauberian Theorem \ to obtain the asymptotic behavior of
$N(T)$, see e.g. \cite[Appendix A]{Chamber-Loir-Tschinkel}, \cite[Chapter 2,
Section 14]{Shubin}. Any way we expect that

\begin{conjecture}
\label{Conjecture}$N(T)\sim CT^{\frac{n}{\beta}}$, for some suitable positive
constant $C$.\medskip
\end{conjecture}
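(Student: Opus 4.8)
The plan is to prove the conjecture exactly as stated by bypassing the Ikehara route, which is obstructed (as noted in Remark \ref{Nota3}) by the possible poles of $\zeta(s;\boldsymbol{A}_\beta)$ on the line $\operatorname{Re}(s)=\frac{n}{\beta}$, and instead to attack the counting function directly through the heat trace by means of a Karamata-type Tauberian theorem. The decisive structural advantage here is that $N(T)$ is nondecreasing, which supplies the Tauberian side condition that Karamata's theorem requires but Ikehara's does not. Writing the spectral measure $dN$, Theorem \ref{Theorem2A}(i) exhibits the heat trace as the Laplace--Stieltjes transform
\[
Tr\left(e^{-t\boldsymbol{A}_\beta}\right)=\sum_{m=1}^{\infty}e^{-\lambda_m t}=\int_{0}^{\infty}e^{-t\lambda}\,dN(\lambda),\qquad t>0 .
\]
If one can establish the single-sided asymptotic $Tr\left(e^{-t\boldsymbol{A}_\beta}\right)\sim c\,t^{-\frac{n}{\beta}}$ as $t\to 0^{+}$ for some $c>0$, then Karamata's theorem yields at once $N(T)\sim \frac{c}{\Gamma\!\left(\frac{n}{\beta}+1\right)}T^{\frac{n}{\beta}}$, which is precisely the conjectured statement with $C=\frac{c}{\Gamma(\frac{n}{\beta}+1)}$; this constant must also equal $\frac{\beta}{n}\operatorname{Res}_{s=\frac{n}{\beta}}\zeta(s;\boldsymbol{A}_\beta)$, giving an independent consistency check through Lemma \ref{lemma5}.

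The steps, in order, are as follows. First I would rewrite the trace using the explicit spectral data from Proposition \ref{Prop1} and Remark \ref{lambda}, grouping by distinct eigenvalues, as
\[
Tr\left(e^{-t\boldsymbol{A}_\beta}\right)=(1-p^{-n})\sum_{m=1}^{\infty}p^{nm}e^{-tA_\beta(p^m)} .
\]
Second, using the two-sided control $C_0\|\xi\|_p^\beta\le A_\beta(\xi)\le C_1\|\xi\|_p^\beta$ of (\ref{1w}), I would substitute $t=p^{-\beta u}$, so that $t^{\frac{n}{\beta}}Tr=(1-p^{-n})\sum_{m}p^{n(m-u)}e^{-p^{\beta(m-u)}}$, and compare this with the bi-infinite lattice (theta-type) sum in the variable $u$. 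Third, I would prove that $t^{\frac{n}{\beta}}Tr\left(e^{-t\boldsymbol{A}_\beta}\right)$ converges to a constant as $t\to 0^{+}$, which is exactly the hypothesis Karamata demands. Fourth, I would apply Karamata and read off the constant via the residue computation above.

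The main obstacle is precisely the third step, the existence of $\lim_{t\to 0^{+}}t^{\frac{n}{\beta}}Tr\left(e^{-t\boldsymbol{A}_\beta}\right)$. Theorem \ref{Theorem2A}(ii) delivers only the two-sided bound $C\le t^{\frac{n}{\beta}}Tr\le C'$, and the geometric spacing of the spectrum is what stands in the way of upgrading this to a genuine limit: after the substitution $t=p^{-\beta u}$ the rescaled trace depends on $u$ essentially through its fractional part $\{u\}$, and for the prototypical Taibleson symbol $A_\beta(\xi)=\|\xi\|_p^\beta$ it converges, as $u\to\infty$ with $\{u\}=\theta$ fixed, to the periodic function $F(\theta)=(1-p^{-n})\sum_{l\in\mathbb{Z}}p^{n(l-\theta)}e^{-p^{\beta(l-\theta)}}$, whose Poisson expansion has nonvanishing higher Fourier coefficients. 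Thus the required limit does not exist in full generality, and Karamata cannot be invoked as is; this oscillatory (theta) contribution is the genuine analytic difficulty behind the conjecture. I therefore expect that the stated asymptotic can be secured only under a supplementary regularity hypothesis on $A_\beta$ that forces the oscillatory component to be constant --- for instance a suitable averaging condition on the sequence $\{A_\beta(p^m)/p^{m\beta}\}_{m}$ guaranteeing that $\zeta(s;\boldsymbol{A}_\beta)$ is holomorphic on $\operatorname{Re}(s)=\frac{n}{\beta}$ apart from the simple pole of Lemma \ref{lemma5}. Under such a hypothesis the limit in the third step exists, Karamata applies, and the constant $C$ is pinned down by the residue; isolating the sharpest such condition on $A_\beta$ is, I believe, the crux of any complete proof.
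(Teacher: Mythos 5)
You should first be clear about what the paper itself does here: it does \emph{not} prove this statement. It is stated as Conjecture \ref{Conjecture}, Remark \ref{Nota3} merely records why the Ikehara route is blocked, and Theorem \ref{Theorem3}(iii) proves only the one-sided bound $N(T)=O(T^{n/\beta})$. So there is no proof in the paper to compare against, and your proposal must be judged as an independent attack. As such, your strategy is the natural one --- monotonicity of $N$ is exactly the Tauberian side condition Karamata needs and Ikehara does not, and your residue consistency check $C=\frac{\beta}{n}\operatorname{Res}_{s=n/\beta}\zeta(s;\boldsymbol{A}_{\beta})$ is correct --- and, crucially, your diagnosis of where it breaks is accurate: the limit $\lim_{t\to 0^{+}}t^{n/\beta}\,Tr(e^{-t\boldsymbol{A}_{\beta}})$ required in your third step genuinely fails to exist in general, so Karamata cannot be invoked. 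You did not prove the conjecture, but you correctly located the one obstruction that any proof must overcome.

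In fact you should push your own computation one step further, because it does more than block Karamata: it refutes the conjecture as literally stated. For the Taibleson operator, Example \ref{Example1} gives $\lambda_{m}=p^{m\beta}$ with multiplicity $(1-p^{-n})p^{nm}$, so for $p^{M\beta}\leq T<p^{(M+1)\beta}$,
\[
N(T)=(1-p^{-n})\sum_{m=1}^{M}p^{nm}=p^{nM}-1,
\qquad\text{hence}\qquad
\frac{N(T)}{T^{n/\beta}}=p^{-n\left\{\beta^{-1}\log_{p}T\right\}}-T^{-n/\beta},
\]
which oscillates persistently between (essentially) $p^{-n}$ and $1$; no constant $C$ works. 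Equivalently, by the Abelian half of Karamata's theorem, $N(T)\sim CT^{n/\beta}$ would \emph{force} your third-step limit to exist, contradicting the nonconstant periodic function $F(\theta)$ you exhibit (its Fourier coefficients are essentially values $\frac{1}{\beta\ln p}\Gamma\bigl(\frac{n}{\beta}-\frac{2\pi ik}{\beta\ln p}\bigr)$, all nonzero). This is the log-periodic phenomenon familiar from spectral zeta functions on fractals, mirrored here in the poles at $\frac{n}{\beta}+\frac{2\pi i\mathbb{Z}}{\beta\ln p}$ noted in Theorem \ref{Theorem3}(ii). The statements actually within reach are: a two-sided refinement $cT^{n/\beta}\leq N(T)\leq c^{\prime}T^{n/\beta}$ of Theorem \ref{Theorem3}(iii) (the lower bound follows from (\ref{1w}) exactly as the upper one), or $N(T)=G(\log_{p}T)\,T^{n/\beta}(1+o(1))$ with $G$ periodic, or the conjecture under your proposed supplementary hypothesis killing the oscillation. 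Be warned, though, that the Taibleson example shows such a hypothesis excludes the most basic operators in the class: since every $A_{\beta}$ is radial, its spectrum lives on the geometric lattice $\{A_{\beta}(p^{m})\}_{m\geq 1}$ with multiplicities $\asymp p^{nm}$, so the oscillation is built into the $p$-adic setting itself, and a hypothesis forcing $F$ constant would have to constrain the values $A_{\beta}(p^{m})/p^{m\beta}$ severely.
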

\end{remark}

\end{document}